\documentclass[10pt]{article}

% This first part of the file is called the PREAMBLE. It includes
% customizations and command definitions. The preamble is everything
% between \documentclass and \begin{document}.

\usepackage[margin=1in]{geometry}  % set the margins to 1in on all sides
\usepackage{graphicx}              % to include figures
\usepackage{amsmath}
\usepackage{mathrsfs}              % great math stuff
\usepackage{amsfonts}              % for blackboard bold, etc
\usepackage{amsthm}
\usepackage{tikz}               % better theorem environments
\usetikzlibrary{arrows,positioning}
\usetikzlibrary{calc}
\usepackage{algorithm,multirow,booktabs,siunitx}
\usepackage{pdflscape}

\usepackage{algpseudocode}

\usepackage{algorithm}% http://ctan.org/pkg/algorithm
\usepackage{algpseudocode}% http://ctan.org/pkg/algorithmicx

\makeatletter
\def\BState{\State\hskip-\ALG@thistlm}
\makeatother

\usepackage{fullpage}

 \usepackage{times}
\newcommand*{\mynum}[1]{\num[output-decimal-marker={.},
   round-mode=places,
 round-precision=2,
  group-digits=false]{#1}}

% various theorems, numbered by section

\newtheorem{thm}{Theorem}[section]

\newtheorem{cor}[thm]{Corollary}

\newtheorem{remark}[thm]{Remark}

\DeclareMathOperator{\id}{id}

\newcommand{\bd}[1]{\mathbf{#1}}  % for bolding symbols
\newcommand{\RR}{\mathbb{R}}      % for Real numbers
\newcommand{\ZZ}{\mathbb{Z}}      % for Integers
\newcommand{\comb}[2]{\binom{#1^2 + #2^2}{#1+#2}}
\DeclareMathAlphabet{\mathbmit}{OML}{cmm}{b}{it}
\renewcommand{\vec}[1]{\mathbmit{#1}}
\let\matr\vec
\def\th#1{{#1}\textsuperscript{th}}

\begin{document}
%\articletype{GUIDE}

\nocite{*}

\title{Representations of  quadratic combinatorial optimization problems: A case study using the quadratic set covering problem}

\author{Abraham P. Punnen and  Pooja Pandey \thanks{ Corresponding author. Email: poojap@sfu.ca}\\
Department of Mathematics, Simon Fraser University\\
 250 - 13450 – 102nd Avenue, Surrey, BC, V3T 0A3, Canada}

\maketitle

\begin{abstract}

The objective function of a quadratic combinatorial optimization problem (QCOP) can be represented by two data points, a quadratic cost matrix $\matr{Q}$ and a linear cost vector $\vec{c}$. Different, but equivalent, representations of the pair $(\matr{Q},\vec{c})$ for the same QCOP are well known in literature. Research papers often state that without loss of generality we assume $\matr{Q}$ is symmetric, or upper-triangular or  positive semidefinite, etc. These representations however have inherently different properties. Popular general purpose 0-1 QCOP solvers such as GUROBI and CPLEX do not suggest a preferred representation of $\matr{Q}$ and $\vec{c}$. Our experimental analysis discloses that GUROBI prefers the upper triangular representation of the matrix $\matr{Q}$ while CPLEX prefers the symmetric representation in a statistically significant manner. Equivalent representations, although preserve optimality, they could alter the corresponding lower bound values obtained by various lower bounding schemes. For the natural lower bound of a QCOP, symmetric representation produced tighter bounds, in general. Effect of equivalent representations when CPLEX and GUROBI run in a heuristic mode are also explored. Further, we review various equivalent representations of a QCOP from the literature that have theoretical basis to be viewed as 'strong'  and provide new theoretical insights for generating such equivalent representations making use of constant value property and diagonalization (linearization) of QCOP instances.
\end{abstract}

% \begin{keywords}
% 0-1 programming, quadratic programming, set cover, algorithm.
% \end{keywords}
%\keywords{0-1 programming, quadratic programming, set cover, algorithm.}
%\begin{classcode} 90C10,  	90C32. \end{classcode}
\smallskip
\noindent \textbf{Keywords.}  0-1 quadratic programming, experimental analysis of algorithms, set covering problem, equivalent formulations, combinatorial optimization.

\section{Introduction}
Let $E = \{1,2,\dotsc,n\}$ be a finite set and $ \hat{F} $ be a family of subsets of $E$. For each $j \in E$, a cost $c_j$ is prescribed. Further, for each $(i,j) \in E \times E$,  a cost $q_{ij}$ is also prescribed. Note that any $S \in \hat{F}$ can be represented by its $0-1$ incidence vector  $ \vec{x} = (x_1,\hdots,x_n)$ where $x_j=1$ if and only if $j \in S$. Thus $\hat{F}$ can be represented as $\mathscr{F}= \{ \vec{x} \in \{0,1\}^n: \vec{x}  \text{  is an incidence vector of some  } S \in \hat{F} \}$. Let $\matr{Q}$ be the $n \times n $ matrix such that its $(i,j)^{th}$ element is $q_{ij}$ and $\vec{c}=(c_1,\hdots,c_n)$. Then, the \textit{quadratic combinatorial optimization problem} (QCOP) is to
\begin{align*}
 \mbox{Minimize~ } & \vec{c}\vec{x} + \vec{x}^T\matr{Q}\vec{x}\\
 \mbox{Subject to } & \vec{x} \in \mathscr{F}
\end{align*}
or equivalently
\begin{align*}
 \mbox{Minimize~ } & \sum_{i \in S} c_i+  \sum_{i \in S}   \sum_{j \in S} q_{ij}\\
 \mbox{Subject to } & S \in \hat{F}.
\end{align*}

The well known quadratic assignment problem~\cite{1998Cela,1963Lwaler}, the quadratic unconstrained binary optimization problems (QUBO) \cite{2014Gary}, and the quadratic knapsack problem~\cite{2007Pisinger} are special cases of QCOP. Other examples of QCOP  include the  quadratic travelling salesman problem \cite{2014Fischer,2017Oswin,2017Punnen}, the quadratic shortest path problem~\cite{2017Hu,2015Rostami}, the general quadratic $0-1$ programming problem \cite{2004Billionnet,be1,ba1,ba10,gal,ba5,ba2,2007Sherali}, the quadratic spanning tree problem \cite{1992Assad,{2017Ante}}, quadratic set covering problem~\cite{bruno,2005hammer,2017Pandey}, 0-1 bilinear programs \cite{ac1,gu1}, and combinatorial optimization problems with interaction costs~\cite{2017Lendl}.\\

When the elements of $\mathscr{F}$ are represented by a collection of linear  constraints in binary variables, QCOP can be solved using general purpose binary quadratic programming solvers such as CPLEX \cite{CPLEX12.5} or GUROBI \cite{GUROBI605}. The matrix $\matr{Q}$ associated with a QCOP can be represented in many different but equivalent forms using appropriate  transformations on $\matr{Q}$ and $\vec{c}$. For example, it is possible to force $\matr{Q}$  to have  properties such as $\matr{Q}$ is positive semidefinite~\cite{1970Hammer},  negative semidefinite~\cite{1970Hammer}, symmetric with diagonal entries zero~\cite{1970Hammer},  upper (lower) triangular~\cite{gal} etc.,  so that the resulting problem is equivalent to the given QCOP. Many authors use one of these equivalent representations to define a QCOP.  This raises the question: ``Which representation of $\matr{Q}$ is better from a computational point of view?'' The answer to this question depends on how one defines a `better representation'. Extending the work of Hammer and Rubin~\cite{1970Hammer}, Billionnet et al~\cite{be1} used diagonal perturbations in an 'optimal' way to create strong reformulations of QUBO.  Billionnet~\cite{2004Billionnet}, Billionnet et al~\cite{ba1,ba10}, and P\"{o}rn et al.~\cite{ba2} extended this further to include perturbations involving non-diagonal elements by making use of linear equality constraints, if any, associated with a QCOP. These reformulations  force $\matr{Q}$ to be symmetric and positive semidefinite yielding strong continuous relaxation. Galli and Letchford~\cite{gal} obtained strong reformulations using quadratic constraints of equality type. Although all these representations are very interesting in terms of obtaining strong lower bounds at the root node of a branch-and-bound search tree, they require additional computational effort that are not readily available within general purpose solvers such as CPLEX or GUROBI. To the best of our knowledge, neither CPLEX nor GUROBI makes a recommendation regarding a simple and specific representation of the $\matr{Q}$ matrix that is normally more effective for their respective solver. \\

It is not difficult to construct examples where one representation works well for CPLEX while the same representation do not work well for GUROBI and vice versa. For example, GUROBI solved a quadratic set covering instance involving 511 constraints and 210 variables in 4933 milliseconds on a PC with windows 7 operating system, Intel 4790 i7 3.60 GHz processor and 32 GB RAM. The same problem, when represented in an equivalent form with symmetry forced, GROBI could not solve in 3 hours. CPLEX solved the problem in 23674 milliseconds and for an equivalent representation with symmetry forced, it solved in 21588 milliseconds.
For another class of problems, GUROBI solved random non-diagonal reformulations efficiently, while structured equivalent formulations where $\matr{Q}$ having properties such as symmetry, triangularity, positive semidefiniteness or  negative semidefiniteness, could not solve many of the problems in this class (see Table 4 and Table 5). CPLEX however solved all these reformulations, although the time taken was larger than that of GUROBI for random perturbations. \\

We also could not find anything in the literature regarding a preferred representation of $\matr{Q}$ for solving QCOP established through systematic experimental analysis. Motivated by this, we investigate on the representation of the $\matr{Q} $ matrix for a QCOP. Unlike the way  interesting theoretical works reported in ~\cite{2004Billionnet,be1,ba1,ba10,gal,ba2},  we are not attempting to develop optimal representation based on some desirability criteria. Our experimental results in Table 4 and Table 5 substantiate the merit of investigating this line of reasoning as well.  Consequently, we present various transformations that provide equivalent representations of the problem, not necessarily 'optimal' ones.  From these representations, we identify six simple and well known classes that are compared using CPLEX and GUROBI. The experimental study discloses that CPLEX prefers symmetric or symmetric with a diagonal perturbation yields a positive definite matrix, whereas GUROBI prefers an upper triangular $\matr{Q}$ matrix. Although there are outliers, statistical significance of these observations are established through Wilcoxin test \cite{1945Wilcoxon}.  We also propose ways to construct strong reformulations making use of constant value property~\cite{cb2,cb1} associated with linear combinatorial optimization problems and the concept of diagonalizable (linearizable) cost matrices associated with a QCOP~\cite{2017Ante,2017Hu,2011Kabadi,2017Punnen}. \\

Equivalent representation of the data could also influence lower bound calculations for a QCOP. To demonstrate its  impact, we used a generalization of the well know Gilmore-Lawler lower bound~\cite{1962Gilmore,1963Lwaler} and its variations~\cite{1992Assad}. Our experiments show that for most of the test problems we used, the strongest lower bound was obtained when used an equivalent representation where $\matr{Q}$ is  forced to be symmetric, except for one class of test problems for  which the upper triangular structure produced tighter bound.\\

To conduct experimental analysis, we selected the {\it quadratic set covering problem} (QSCP). The QSCP model have applications in the wireless local area planning  and the problem of locating access points  to guarantee full coverage~\cite{2004amaldi}.   Other application areas of QSCP include logical analysis of data \cite{2005hammer}, medicine \cite{2000boros,bruno}, facility layout problems \cite{1975bazaraa}, line planning in public transports  \cite{1990ceder, 1995israeli}  etc. Another motivation for selecting the QSCP as our test case is  that   relatively fewer computational studies  available for this model. Thus, this work also contributes to experimental analysis of exact and heuristic algorithms for the QSCP.\\

The paper is organized as follows. In section 2, we  discuss various equivalent representations of  the QCOP. Some of these representations are generated using diagonalizable (linearizable) quadratic cost matrices and linear cost vectors satisfying  constant value property. Characterization of diagonalizable cost matrices for the general QCOP and for a restricted version where feasible solutions have the same cardinality are also given. We also present a natural lower bound  for QCOP, that is valid under the equivalent transformations.  Section 3 discusses details of the experimental platform, generation of test data, and experimental results  on QSCP  using CPLEX12.5  and GUROBI6.0.5 comparing selected equivalent representations for exact and heuristic solutions. Experimental analysis using the  natural lower bound is also given in this section followed  by concluding remarks  in Section 4.\\

Throughout the paper, we use the following notations. For a given $\mathscr{F}$, a QCOP can be represented by $(\matr{Q}, \matr{c})$. The matrix $\matr{Q}$ is called the \textit{quadratic cost matrix} and the vector $\vec{c}$ is called the \textit{linear cost vector}. For an instance $(\matr{Q}, \vec{c})$ of a QCOP and an $\vec{x}\in \mathscr{F}$, $f(\matr{Q}, \vec{c}, \vec{x}) = \vec{x}^T\matr{Q}\vec{x} + \vec{c}\vec{x}$.  $\mathscr{F}_R$ is the continuous relaxation of $\mathscr{F}$. i.e. $\mathscr{F}_R$ is obtained by replacing the constraints $x_j\in \{0,1\}$ in the definition of $\mathscr{F}$ by $0\leq x_j \leq 1$. For $\vec{x} \in \mathscr{F}_R$ we denote $f_R(\matr{Q}, \vec{c}, \vec{x}) = \vec{x}^T\matr{Q}\vec{x} + \vec{c}\vec{x}$. For any matrix $\matr{Q}$, $Diag(\matr{Q})$ is the diagonal matrix of same size as $\matr{Q}$ with its \th{$(i,i)$} element is $q_{ii}$ and $Diag(\matr{Q})$ represents the vector $ (q_{11},q_{22},\ldots ,q_{nn})$ . For any vector $\vec{a} = (a_1,a_2,\ldots ,a_n)$, $Diag(\vec{a})$ is the $n\times n$ diagonal matrix with its \th{($i,i$)} element is $a_i, i=1,2,\ldots ,n$.
 All matrices are represented using capital letters and all elements of the matrix are represented by corresponding double subscripted lower-case letters where the subscripts denoting row and column indices.  Vectors in $\mathbb{R}^n$ are represented by bold lower-case letters.  The $i$th component of vector $\vec{a}$ is $a_i$, of vector $\bar{\vec{b}}$, is $\bar{b}_i$ etc. The transpose of a matrix $\matr{Q}$ is represented by $\matr{Q}^T$.  The vector space of all real valued $n\times n$ matrices  with standard matrix addition and scalar multiplication is denoted by $\mathbb{M}^n$.

\section{Equivalent representations}

Let $(\matr{Q},\vec{c})$ be an instance of a QCOP. Then, $(\matr{Q}^1,\vec{c}^1)$ is an \textit{equivalent representation} of $(\matr{Q},\vec{c})$ if $f(\matr{Q},\vec{c},x) = f(\matr{Q}^1,\vec{c}^1,x)$ for all $x\in \mathscr{F}$. The following remark is well known.

\begin{remark}\label{rma-1}
$(\matr{Q}^T,\vec{c})$ is an equivalent representation of $(\matr{Q},\vec{c})$.
\end{remark}

\begin{thm}\label{thma-1}
If $(\matr{Q}^1,\vec{c}^1)$, $(\matr{Q}^2,\vec{c}^2), \ldots , (\matr{Q}^k,\vec{c}^k)$ are equivalent representations of an instance $(\matr{Q},\vec{c})$ of a QCOP  then $\left(\frac{1}{\sum_{i=1}^k\alpha_i}\left[\sum_{i=1}^k\alpha_i\matr{Q}^i\right],
\frac{1}{\sum_{i=1}^k\alpha_i}\left[\sum_{i=1}^k\alpha_i\vec{c}^i\right]\right)$
is also an equivalent representation of $(\matr{Q},\vec{c})$ whenever $\sum_{i=1}^k\alpha_i \neq 0$.
\end{thm}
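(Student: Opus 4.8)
The plan is to verify the defining equation of equivalent representation directly for the claimed convex-combination-like pair. Write $\beta = \sum_{i=1}^k \alpha_i$, which is nonzero by hypothesis, and set $\matr{Q}^* = \frac{1}{\beta}\sum_{i=1}^k \alpha_i \matr{Q}^i$ and $\vec{c}^* = \frac{1}{\beta}\sum_{i=1}^k \alpha_i \vec{c}^i$. The goal is to show $f(\matr{Q}^*,\vec{c}^*,\vec{x}) = f(\matr{Q},\vec{c},\vec{x})$ for every $\vec{x}\in\mathscr{F}$.

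The first step is to observe that the map $\vec{x}\mapsto f(\matr{M},\vec{d},\vec{x}) = \vec{x}^T\matr{M}\vec{x} + \vec{d}\vec{x}$ is linear in the data pair $(\matr{M},\vec{d})$ for each fixed $\vec{x}$; that is, $f(a\matr{M}_1 + b\matr{M}_2,\, a\vec{d}_1 + b\vec{d}_2,\, \vec{x}) = a\,f(\matr{M}_1,\vec{d}_1,\vec{x}) + b\,f(\matr{M}_2,\vec{d}_2,\vec{x})$, which is immediate from the distributivity of matrix and vector multiplication. Applying this with the coefficients $\alpha_i/\beta$ gives, for any fixed $\vec{x}\in\mathscr{F}$,
\[
f(\matr{Q}^*,\vec{c}^*,\vec{x}) = \sum_{i=1}^k \frac{\alpha_i}{\beta}\, f(\matr{Q}^i,\vec{c}^i,\vec{x}).
\]
Then I would invoke the hypothesis that each $(\matr{Q}^i,\vec{c}^i)$ is an equivalent representation of $(\matr{Q},\vec{c})$, so $f(\matr{Q}^i,\vec{c}^i,\vec{x}) = f(\matr{Q},\vec{c},\vec{x})$ for all $i$ and all $\vec{x}\in\mathscr{F}$. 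Substituting this into the sum yields $\sum_{i=1}^k \frac{\alpha_i}{\beta} f(\matr{Q},\vec{c},\vec{x}) = \frac{1}{\beta}\bigl(\sum_{i=1}^k \alpha_i\bigr) f(\matr{Q},\vec{c},\vec{x}) = f(\matr{Q},\vec{c},\vec{x})$, which is exactly what is required.

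There is no real obstacle here; the statement is essentially the observation that the set of equivalent representations of a fixed instance forms an affine subspace of $\mathbb{M}^n \times \mathbb{R}^n$, and the only subtlety worth flagging explicitly is the role of the condition $\beta \neq 0$, which is needed both for the pair to be well-defined and for the final cancellation $\frac{1}{\beta}\cdot\beta = 1$ to make sense. I would keep the write-up to a few lines, stating the linearity observation, the substitution, and the cancellation, and noting that the hypothesis $\sum_{i=1}^k\alpha_i \neq 0$ is used precisely at those two points.
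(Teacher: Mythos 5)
Your proposal is correct and follows essentially the same argument as the paper's proof: expand $f$ at the averaged pair using its linearity in the data $(\matr{M},\vec{d})$ for fixed $\vec{x}$, substitute $f(\matr{Q}^i,\vec{c}^i,\vec{x}) = f(\matr{Q},\vec{c},\vec{x})$ from the hypothesis, and cancel $\frac{1}{\sum_i\alpha_i}\sum_i\alpha_i = 1$. The only difference is cosmetic, namely your explicit remark about where $\sum_i\alpha_i\neq 0$ is used, which is a fine addition but not a new idea.
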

\begin{proof}
Let $\matr{A} = \frac{1}{\sum_{i=1}^k\alpha_i}\left[\sum_{i=1}^k\alpha_i\matr{Q}^i\right]$ and $\vec{b} = \frac{1}{\sum_{i=1}^k\alpha_i}\left[\sum_{i=1}^k\alpha_i\vec{c}^i\right]$. Then
\begin{align*}
f(\matr{A},\vec{b},x) &= \vec{x}^T\matr{A}\vec{x}+\vec{b}\vec{x}
=\frac{1}{\sum_{i=1}^k\alpha_i}\sum_{i=1}^k\left(\vec{x}^T\alpha_i\matr{Q}^i\vec{x}+\alpha_i\vec{c}^i\vec{x}\right)
=\frac{1}{\sum_{i=1}^k\alpha_i}\sum_{i=1}^k\left(\alpha_if(\matr{Q}^i,\vec{c}^i,\vec{x})\right)\\
&=\frac{1}{\sum_{i=1}^k\alpha_i}\sum_{i=1}^k\left(\alpha_if(\matr{Q},\vec{c},\vec{x})\right)
=f(\matr{Q},\vec{c},\vec{x}).
\end{align*}
\end{proof}
From Remark~\ref{rma-1} and Theorem~\ref{thma-1} we have the following well-known corollary.
\begin{cor}\label{cora-1}
$\left(\frac{1}{2}\left[\matr{Q}+\matr{Q}^T\right],\vec{c}\right)$ is an equivalent representation of $(\matr{Q},\vec{c})$.
\end{cor}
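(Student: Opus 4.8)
The plan is to obtain this directly from Remark~\ref{rma-1} and Theorem~\ref{thma-1}, so essentially no new computation is required. First I would note the trivial fact that $(\matr{Q},\vec{c})$ is an equivalent representation of itself, since $f(\matr{Q},\vec{c},\vec{x}) = f(\matr{Q},\vec{c},\vec{x})$ for every $\vec{x}\in\mathscr{F}$. Second, Remark~\ref{rma-1} supplies a genuinely different equivalent representation, namely $(\matr{Q}^T,\vec{c})$.

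Then I would invoke Theorem~\ref{thma-1} with $k=2$, choosing $(\matr{Q}^1,\vec{c}^1) = (\matr{Q},\vec{c})$, $(\matr{Q}^2,\vec{c}^2) = (\matr{Q}^T,\vec{c})$, and weights $\alpha_1 = \alpha_2 = 1$. Since $\sum_{i=1}^2 \alpha_i = 2 \neq 0$, the hypothesis of the theorem is satisfied, and it yields that $\left(\tfrac{1}{2}\left[\matr{Q}+\matr{Q}^T\right],\ \tfrac{1}{2}\left[\vec{c}+\vec{c}\right]\right)$ is an equivalent representation of $(\matr{Q},\vec{c})$. Simplifying $\tfrac{1}{2}\left[\vec{c}+\vec{c}\right]=\vec{c}$ gives exactly the stated pair $\left(\tfrac{1}{2}\left[\matr{Q}+\matr{Q}^T\right],\vec{c}\right)$.

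There is no real obstacle; the only point worth stating explicitly is that the weighted-average construction of Theorem~\ref{thma-1} applies because the two chosen representations are both truly equivalent to $(\matr{Q},\vec{c})$ and the coefficient sum is nonzero. As an alternative one could give a one-line direct check, using that $\vec{x}^T\matr{Q}^T\vec{x} = (\vec{x}^T\matr{Q}\vec{x})^T = \vec{x}^T\matr{Q}\vec{x}$ because it is a scalar, whence $\vec{x}^T\!\left(\tfrac{1}{2}\left[\matr{Q}+\matr{Q}^T\right]\right)\!\vec{x} = \vec{x}^T\matr{Q}\vec{x}$; but deducing it from the already-established Remark~\ref{rma-1} and Theorem~\ref{thma-1} is cleaner and makes the corollary's placement natural.
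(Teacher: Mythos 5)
Your proposal is correct and follows exactly the route the paper intends: the corollary is stated as an immediate consequence of Remark~\ref{rma-1} and Theorem~\ref{thma-1}, obtained by taking $k=2$, the pair $(\matr{Q},\vec{c})$ itself together with $(\matr{Q}^T,\vec{c})$, and equal weights $\alpha_1=\alpha_2=1$. Your explicit remark that $(\matr{Q},\vec{c})$ is trivially an equivalent representation of itself is the only detail the paper leaves unstated, and it is correctly supplied.
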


We call the equivalent representation $\left(\frac{1}{2}\left[\matr{Q}+\matr{Q}^T\right],\vec{c}\right)$ given in Corollary~\ref{cora-1} the \textit{symmetrization}. This representation is well known and used extensively in literature. Since $\frac{1}{2}\left[\matr{Q}+\matr{Q}^T\right]$ is a symmetric matrix, it is sometimes viewed as a desirable representation. However, symmetrization could also result in a matrix with increased or decreased rank. Thus the equivalent representation obtained by symmetrization could have properties different  from those of  the original representation and this could impact the computational performance of different algorithms. Note that $f_R(\matr{Q},\vec{c},\vec{x}) = f_R(\matr{Q^T},\vec{c},\vec{x})=f_R(\frac{1}{2}\left[\matr{Q+Q^T}\right],\vec{c},\vec{x})$ for all $x\in \mathscr{F}_R$. Thus, the symmetrization operation also preserves the objective function value of the continuous relation of a QCOP. This property no longer holds for some other equivalent representations discussed later.\\

If one or more elements of $\matr{Q}$, say $q_{ij}$ is perturbed by $\epsilon_{ij}$ and adjusting this by subtracting $\epsilon_{ij}$ from $q_{ji}$, we immediately get an equivalent representation of $\matr{Q}$. Equivalent representations obtained this way have the  structure of $\matr{Q}^{\prime}$ discussed in the theorem below.

 \begin{thm} \label{thm-2.4}
If $\matr{Y}$ is a skew-symmetric matrix, $\matr{D}$ is a diagonal matrix, $\matr{Q}^{\prime}= \matr{Q}+\matr{Y}+\matr{D}$, and $\vec{c}^{\prime}=\vec{c}-diag(\matr{D})$, then $(\matr{Q}^{\prime},\vec{c}^{\prime})$ is an equivalent representation of $(\matr{Q},\vec{c})$.
 \end{thm}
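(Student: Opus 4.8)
The plan is to verify directly that $f(\matr{Q}^{\prime},\vec{c}^{\prime},\vec{x}) = f(\matr{Q},\vec{c},\vec{x})$ for every $\vec{x}\in\mathscr{F}$, relying on two elementary observations: a skew-symmetric matrix contributes nothing to a quadratic form, and for a $0$-$1$ vector the diagonal part of a quadratic form can be traded for a linear term. The entire argument is a short computation; there is no substantive obstacle, only one point (the use of integrality) that deserves to be flagged.

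First I would expand, using bilinearity of the two summands of $f$,
\[
f(\matr{Q}^{\prime},\vec{c}^{\prime},\vec{x}) = \vec{x}^T\matr{Q}\vec{x} + \vec{x}^T\matr{Y}\vec{x} + \vec{x}^T\matr{D}\vec{x} + \vec{c}\vec{x} - diag(\matr{D})\vec{x}.
\]
Next I would note that $\vec{x}^T\matr{Y}\vec{x}=0$: being a scalar it equals its own transpose, so $\vec{x}^T\matr{Y}\vec{x}=\vec{x}^T\matr{Y}^T\vec{x}=-\vec{x}^T\matr{Y}\vec{x}$, which forces the value to vanish; this uses only skew-symmetry of $\matr{Y}$ and holds for arbitrary real $\vec{x}$. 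Finally — and this is the only place the binary structure of $\mathscr{F}$ enters — I would use $x_j^2=x_j$ for all $j$ whenever $\vec{x}\in\{0,1\}^n$, so that $\vec{x}^T\matr{D}\vec{x}=\sum_j d_{jj}x_j^2=\sum_j d_{jj}x_j=diag(\matr{D})\vec{x}$. Substituting these two facts, the $+diag(\matr{D})\vec{x}$ and $-diag(\matr{D})\vec{x}$ terms cancel, leaving $\vec{x}^T\matr{Q}\vec{x}+\vec{c}\vec{x}=f(\matr{Q},\vec{c},\vec{x})$, which is exactly what equivalence requires.

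What I would emphasize rather than belabor is the role of integrality: the cancellation of the diagonal perturbation rests on $x_j^2=x_j$, so it is false on $\mathscr{F}_R$, and consequently this family of representations generally alters $f_R$ — in contrast to the symmetrization of Corollary~\ref{cora-1}. This is precisely the latitude that diagonal-perturbation reformulations exploit to strengthen the continuous relaxation. I would also point out, as a sanity check, that Remark~\ref{rma-1} is recovered as the special case $\matr{Y}=\matr{Q}^T-\matr{Q}$ (which is indeed skew-symmetric) and $\matr{D}=\mathbf{0}$, and that while Theorem~\ref{thma-1} can be brought to bear on related constructions, here the one-line direct computation is the cleanest route.
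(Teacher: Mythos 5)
Your proposal is correct and follows exactly the paper's route: the paper's one-line proof invokes precisely the two facts you use, namely $\vec{x}^T\matr{Y}\vec{x}=0$ for skew-symmetric $\matr{Y}$ and $x_i^2=x_i$ on $\mathscr{F}$, so the diagonal perturbation cancels against the adjusted linear term. You merely spell out the justifications (and the remark about $f_R$) in more detail than the paper does.
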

\begin{proof}
Since $\vec{x}^T\matr{Y}\vec{x} = 0$ and $x_i^2=x_i$ for all $i=1,2,\ldots ,n$ it follows that $f(\matr{Q},\vec{c}, \vec{x}) = f(\matr{Q}^{\prime},\vec{c}^{\prime}, \vec{x})$.
\end{proof}

In the above theorem, if $\matr{Q}$ is symmetric and if we want $\matr{Q}^{\prime}$ also to be symmetric, then $\matr{Y}$ must be the zero matrix. In this case, we can choose $\matr{D} = \lambda \matr{I}$ for sufficiently large $\lambda$ to make $\matr{Q}^\prime$ a symmetric positive semidefinite matrix and hence $f_R(\matr{Q}^\prime,\vec{c}^{\prime},\vec{x})$ becomes convex.
Hammer and Ruben~\cite{1970Hammer} suggested using $\lambda$ as the negative of the smallest eigenvalue of $\matr{Q}$. Billionnet et al~\cite{be1} proposed an 'optimal' choice of the matrix $\matr{D}$ in the case of quadratic unconstrained binary optimization (QUBO) problems. Their selection of $\matr{D}$ is 'optimal' in the sense that the resulting optimal objective function value $f_R(\matr{Q}^{\prime},\vec{c}^{\prime},\vec{x})$ of the continuous relaxation is as large as possible yielding tight lower bounds.  This method extends to QCOP with appropriate restriction on the representation of $\mathscr{F}$.\\

A quadratic cost matrix $\matr{Q}$ associated with a QCOP is said to be \textit{diagonalizable} with respect to $\mathscr{F}$ if there exists a diagonal matrix $\matr{D}$ such that $\vec{x}^{T} \matr{Q} \vec{x} = \vec{x}^{T} \matr{D} \vec{x}$ for all $\vec{x} \in  \mathscr{F}$. The matrix $\matr{D}$ is called a \textit{diagonalization} of $\matr{Q}$ with respect to $\mathscr{F}$.  Here after the terminology `diagonalizable" (``diagonalization") means diagonalizable (diagonalization) with respect to  the underlying families $\mathscr{F}$.
Recall that $x_i^2=x_i$ for all $\vec{x} \in \mathscr{F}$  and hence $\vec{x}^{T} \matr{D} \vec{x} = diag(\matr{D})\vec{x}$, where $diag(\matr{D})$ is a vector of size $n$ with its $i^{th}$ element as the  $i^{th}$ diagonal entry of $\matr{D}$. Diagonalizable  matrices form a subspace of the vector space $\mathbb{M}^{n}$ of all $n \times n$ real valued matrices. The concept of diagonalization indicated here is closely related to the  linearization of some quadratic combinatorial optimization problems discussed in~\cite{2017Ante,2017Hu,2011Kabadi,2017Punnen} and for the case of binary variables, these two notions are the same.  Since the terminology  ``linearization" is also used in another context in the case of QCOP~\cite{ba3,2007Sherali}, we preferred to use the more natural and intuitive terminology diagonalization.  Note that if $\matr{Q}$ is diagonalizable then  $\matr{Q}^{T}$ and $\frac{1}{2}\left(\matr{Q}+\matr{Q}^{T}\right)$ are also diagonalizable. Also, any skew-symmetric matrix is diagonalizable and a zero matrix of the same dimension is  diagonalization of a skew symmetric matrix.

\begin{thm}\label{thma-2}  $(\matr{Q} + \matr{A}, \vec{c}-diag(D))$ is an equivalent representation of the QCOP instance $(\matr{Q}, \vec{c})$, where $\matr{A}$ is any diagonalizable  matrix associated with the QCOP and $\matr{D}$ is a diagonalization of $\matr{A}$.
\end{thm}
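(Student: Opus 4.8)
The plan is to show directly that $f(\matr{Q}+\matr{A}, \vec{c}-diag(\matr{D}), \vec{x}) = f(\matr{Q}, \vec{c}, \vec{x})$ for every $\vec{x} \in \mathscr{F}$, by expanding the quadratic form and using the diagonalizability hypothesis. First I would write
$f(\matr{Q}+\matr{A}, \vec{c}-diag(\matr{D}), \vec{x}) = \vec{x}^T(\matr{Q}+\matr{A})\vec{x} + (\vec{c}-diag(\matr{D}))\vec{x} = \vec{x}^T\matr{Q}\vec{x} + \vec{x}^T\matr{A}\vec{x} + \vec{c}\vec{x} - diag(\matr{D})\vec{x}$, so that the claim reduces to verifying $\vec{x}^T\matr{A}\vec{x} = diag(\matr{D})\vec{x}$ for all $\vec{x} \in \mathscr{F}$.

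The second step invokes the definition of diagonalization: since $\matr{D}$ is a diagonalization of $\matr{A}$ with respect to $\mathscr{F}$, we have $\vec{x}^T\matr{A}\vec{x} = \vec{x}^T\matr{D}\vec{x}$ for all $\vec{x} \in \mathscr{F}$. Then, as already noted in the text preceding the theorem, the relation $x_i^2 = x_i$ valid for binary $\vec{x}$ gives $\vec{x}^T\matr{D}\vec{x} = \sum_{i=1}^n d_{ii}x_i^2 = \sum_{i=1}^n d_{ii}x_i = diag(\matr{D})\vec{x}$. Chaining these equalities yields $\vec{x}^T\matr{A}\vec{x} = diag(\matr{D})\vec{x}$, which is exactly what was needed, so the two objective values coincide on $\mathscr{F}$ and $(\matr{Q}+\matr{A}, \vec{c}-diag(\matr{D}))$ is an equivalent representation of $(\matr{Q},\vec{c})$.

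This is essentially a one-line computation once the definitions are unpacked, so there is no real obstacle; the only thing to be careful about is keeping the bookkeeping straight between $diag(\matr{D})$ as a vector (used in the linear term) versus $\matr{D}$ as a diagonal matrix (used in the quadratic form), and making explicit the use of $x_i^2 = x_i$ — which is precisely the point where membership in $\mathscr{F}$ (binary feasibility) rather than in $\mathscr{F}_R$ is essential. It is worth remarking, as the paper does elsewhere, that unlike symmetrization this transformation need not preserve $f_R$ on $\mathscr{F}_R$, since the identity $\vec{x}^T\matr{A}\vec{x} = diag(\matr{A})\vec{x}$ fails for fractional $\vec{x}$.
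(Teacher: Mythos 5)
Your proposal is correct and follows essentially the same route as the paper: expand $f(\matr{Q}+\matr{A},\vec{c}-diag(\matr{D}),\vec{x})$, then use the diagonalization to replace $\vec{x}^T\matr{A}\vec{x}$ by $diag(\matr{D})\vec{x}$ (via $x_i^2=x_i$), so the extra terms cancel. Your explicit remark that the identity relies on binary feasibility and fails on $\mathscr{F}_R$ is a nice touch but does not change the argument.
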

\begin{proof}
Since $\matr{A}$ is diagonalizable, $\vec{x}^T\matr{A}\vec{x} = diag(\matr{D})\vec{x}$ for all $\vec{x} \in \mathscr{F}$. Thus,
\begin{align*}
f(\matr{Q} + \matr{A}, \vec{c}-diag(D),\vec{x}) &= \vec{x}^T(\matr{Q} + \matr{A})\vec{x} + (\vec{c}-diag(D))\vec{x}\\
&= \vec{x}^T\matr{Q}\vec{x} + \vec{x}^T\matr{A}\vec{x} + \vec{c}\vec{x}-diag(D)\vec{x}\\
&=\vec{x}^T\matr{Q}\vec{x} + diag(D)\vec{x} + \vec{c}\vec{x}-diag(D)\vec{x}\\
&= \vec{x}^T\matr{Q}\vec{x} +  \vec{c}\vec{x} = f(\matr{Q},\vec{c},\vec{x}).
\end{align*}

\end{proof}

\begin{cor}\label{cora-2} If $\matr{A}^1,\matr{A}^2,\ldots ,\matr{A}^m$ are diagonalizable matrices associated with a QCOP and $\alpha_1,\alpha_2,\ldots ,\alpha_m$ are scalars, then  $(\matr{Q} + \sum_{i=1}^m\alpha_i\matr{A}^i, \vec{c}-\sum_{i=1}^m\alpha_i diag(D^i))$ is an equivalent representation of the QCOP instance $(\matr{Q}, \vec{c})$ where $\matr{D}^i$ is a diagonalization of $\matr{A}^i$ for $i=1,2,\ldots ,m$.
\end{cor}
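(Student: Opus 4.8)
The plan is to deduce this from a single application of Theorem~\ref{thma-2}, once one checks that a scalar linear combination of diagonalizable matrices is again a diagonalizable matrix and that the associated diagonalizations combine in the same way. Recall the observation made just before Theorem~\ref{thma-2}: the diagonalizable matrices associated with the QCOP form a linear subspace of $\mathbb{M}^n$. Hence $\matr{A} := \sum_{i=1}^m \alpha_i \matr{A}^i$ is itself a diagonalizable matrix associated with the QCOP, and it remains only to exhibit one of its diagonalizations.

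I would claim that $\matr{D} := \sum_{i=1}^m \alpha_i \matr{D}^i$ is a diagonalization of $\matr{A}$. To verify this, first note that the map $\matr{D}\mapsto diag(\matr{D})$ is linear, so $diag(\matr{D}) = \sum_{i=1}^m \alpha_i\, diag(\matr{D}^i)$. Then, for any $\vec{x}\in\mathscr{F}$, using that each $\matr{D}^i$ is a diagonalization of $\matr{A}^i$ (so $\vec{x}^T\matr{A}^i\vec{x} = diag(\matr{D}^i)\vec{x}$) together with $x_j^2 = x_j$,
\[
\vec{x}^T \matr{A} \vec{x} = \sum_{i=1}^m \alpha_i\, \vec{x}^T \matr{A}^i \vec{x} = \sum_{i=1}^m \alpha_i\, diag(\matr{D}^i)\,\vec{x} = diag(\matr{D})\,\vec{x} = \vec{x}^T \matr{D}\, \vec{x}.
\]
Thus $\matr{A}$ is diagonalizable with diagonalization $\matr{D}$.

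Now Theorem~\ref{thma-2} applied to the pair $(\matr{A},\matr{D})$ gives that $\bigl(\matr{Q}+\matr{A},\, \vec{c}-diag(\matr{D})\bigr)$ is an equivalent representation of $(\matr{Q},\vec{c})$; substituting the definitions of $\matr{A}$ and $\matr{D}$ yields exactly $\bigl(\matr{Q}+\sum_{i=1}^m\alpha_i\matr{A}^i,\, \vec{c}-\sum_{i=1}^m\alpha_i\, diag(\matr{D}^i)\bigr)$, which is the assertion of the corollary.

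If one prefers to avoid invoking the subspace structure, the same conclusion follows by induction on $m$: the case $m=1$ is precisely Theorem~\ref{thma-2}, and for the inductive step one first passes to the equivalent representation $\bigl(\matr{Q}+\sum_{i=1}^{m-1}\alpha_i\matr{A}^i,\, \vec{c}-\sum_{i=1}^{m-1}\alpha_i\, diag(\matr{D}^i)\bigr)$ and then applies Theorem~\ref{thma-2} once more with the diagonalizable matrix $\alpha_m\matr{A}^m$, whose diagonalization is $\alpha_m\matr{D}^m$. There is no real obstacle here; the only point worth a moment's care is that a diagonalization is generally not unique, so the argument produces the particular diagonalization $\sum_{i=1}^m\alpha_i\matr{D}^i$ of $\matr{A}$ rather than claiming it to be the only one.
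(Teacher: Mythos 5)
Your proof is correct and follows exactly the route the paper intends: the corollary is stated without proof as an immediate consequence of Theorem~\ref{thma-2} together with the linearity of diagonalization (the paper has already noted that diagonalizable matrices form a subspace of $\mathbb{M}^n$). Your verification that $\sum_{i=1}^m\alpha_i\matr{D}^i$ diagonalizes $\sum_{i=1}^m\alpha_i\matr{A}^i$, and the remark on non-uniqueness, simply make explicit what the paper leaves implicit.
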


We can strengthen the equivalent representation given in Corollary~\ref{cora-2} using a result by Galli and Letchford~\cite{gal}.  Since $\matr{A}^1,\matr{A}^2,\ldots ,\matr{A}^m$ are diagonalizable with respective diagonalizations $\matr{D}^1,\matr{D}^2,\ldots ,\matr{D}^m$, our QCOP satisfies the constraints
\begin{align}\label{eqp-1}
\vec{x}^T\matr{A}^i\vec{x} - Diag(\matr{D}^i)\vec{x} = 0 \mbox{ for } i=1,2,\ldots ,m,
\end{align}
 Since $\matr{A}^i$ is diagonalizable with diagonalization $\matr{D}^i$, $\frac{1}{2}\left(\matr{A}^i+(\matr{A}^i)^T\right)$ is a symmetric diagonalizable matrix with $\matr{D}^i$ as its diagonalization. Thus we can assume that $\matr{A}^i$ in equation \eqref{eqp-1} is symmetric for all $i$. Thus, we can apply the quadratic convex reformulation (QCR) technique discussed in~\cite{gal} to yield a strong reformulation making the resulting equivalent formulation have a continuous relaxation which is convex. Note that  symmetric diagonalizable matrices form a subspace of the vector space $\mathbb{M}^n$. We can use $\matr{A}^1,\matr{A}^2,\ldots ,\matr{A}^m$ discussed above as a basis of this subspace and applying QCR reformulation ~\cite{gal} to yield stronger equivalent representations. A recent related work is by Hu and Sotirov \cite{2018Hu}, that used diagonability to obtain strong lower bound for the quadratic shortest path problem on acyclic digraphs.\\

To generate equivalent representations of a  QCOP using Theorem~\ref{thma-2}, Corollary~\ref{cora-2} or  by the QCR method~\cite{gal} as discussed above, we need to identify associated diagonalizable matrices. Characterization  of diagonalizable quadratic cost matrices associated with a QCOP has been studied  by different authors for specific cases, exploiting the underlying structure of $\mathscr{F}$. This include  quadratic assignment problems \cite{2011Kabadi}, special quadratic shortest path problems \cite{2017Hu}, the quadratic spanning tree problem \cite{2017Ante}, and the quadratic traveling salesman problem \cite{2017Punnen}. However, for the general QCOP without restricting the structure of $\mathscr{F}$, the characterization of diagonalizable quadratic cost matrices do not yield rich classes like what was indicated for the special problems mentioned above. This is because QUBO is a special case of QCOP where any subset  of $E$ is feasible.

\begin{thm} \label{thma-3} A quadratic cost matrix $\matr{Q}$ associated with a QCOP is diagonalizable if and only if $\matr{Q} = \matr{Y} + \matr{U}$  where $\matr{Y}$ is a  skew-symmetric matrix and $\matr{U}$ is a diagonal matrix.
\end{thm}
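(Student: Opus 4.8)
The statement characterizes diagonalizable matrices for the *general* QCOP, where (as the preceding paragraph stresses) $\mathscr{F}$ can be as rich as all of $\{0,1\}^n$ (the QUBO case). The "if" direction is essentially already recorded in the text: a skew-symmetric $\matr{Y}$ satisfies $\vec{x}^T\matr{Y}\vec{x}=0$ for all $\vec{x}$, so $\matr{Y}$ is diagonalizable with the zero matrix as a diagonalization; a diagonal matrix $\matr{U}$ is trivially diagonalizable (with diagonalization $\matr{U}$ itself, using $x_i^2=x_i$); and the set of diagonalizable matrices is a subspace of $\mathbb{M}^n$, so $\matr{Y}+\matr{U}$ is diagonalizable with diagonalization $Diag(\matr{U})$. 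So the real content is the "only if" direction.

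For the "only if" direction, the plan is to exploit the fact that for the general QCOP the weakest possible constraint on $\mathscr{F}$ still forces diagonalizability to be extremely rigid. The cleanest route is to first reduce to the worst case: observe that if $\matr{Q}$ is diagonalizable with respect to some $\mathscr{F}$, the identity $\vec{x}^T\matr{Q}\vec{x}=diag(\matr{D})\vec{x}$ need only hold on that particular $\mathscr{F}$, so I cannot in general plug in arbitrary $0/1$ vectors. However, the theorem is stated for "a QCOP" without qualification, which in context means: characterize which $\matr{Q}$ are diagonalizable no matter what $\mathscr{F}$ is — equivalently, the intersection over all $\mathscr{F}$, which is the QUBO case $\mathscr{F}=\{0,1\}^n$. (If the intended reading is "for every $\mathscr{F}$", the QUBO case dominates; if it is "there exists a diagonalization valid for the given $\mathscr{F}$", then the statement as phrased must be implicitly assuming $\mathscr{F}$ generic/full-dimensional, and the proof is the same.) So I assume $\vec{x}^T\matr{Q}\vec{x}=\sum_i d_i x_i$ for all $\vec{x}\in\{0,1\}^n$, where $d_i=q_{ii}$-adjusted diagonal entries.

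The key computation: write $\matr{Q}=(q_{ij})$ and expand $\vec{x}^T\matr{Q}\vec{x}=\sum_i q_{ii}x_i^2+\sum_{i\neq j}q_{ij}x_ix_j=\sum_i q_{ii}x_i+\sum_{i<j}(q_{ij}+q_{ji})x_ix_j$ using $x_i^2=x_i$. For this to equal a purely linear form $\sum_i d_i x_i$ on all of $\{0,1\}^n$, evaluate at $\vec{x}=\vec{e}_i+\vec{e}_j$ (the indicator of $\{i,j\}$): the left side is $q_{ii}+q_{jj}+q_{ij}+q_{ji}$ and must equal $d_i+d_j$; evaluating at $\vec{e}_i$ and $\vec{e}_j$ gives $d_i=q_{ii}$ and $d_j=q_{jj}$; subtracting forces $q_{ij}+q_{ji}=0$ for all $i\neq j$. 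Hence the off-diagonal part of $\matr{Q}$ is skew-symmetric: setting $\matr{Y}$ = the off-diagonal part (which satisfies $y_{ij}=-y_{ji}$, $y_{ii}=0$) and $\matr{U}=Diag(\matr{Q})$ gives $\matr{Q}=\matr{Y}+\matr{U}$ as required, and this also re-derives $d_i=q_{ii}$, i.e. $\matr{D}=\matr{U}$ is the diagonalization.

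The main obstacle is not any calculation — the calculation is a three-line evaluation at $\vec{e}_i,\vec{e}_j,\vec{e}_i+\vec{e}_j$. The only genuinely delicate point is the \emph{interpretation issue} flagged above: the theorem says "a QCOP" but diagonalizability was defined relative to a fixed $\mathscr{F}$, and the characterization $\matr{Q}=\matr{Y}+\matr{U}$ is manifestly false for structured $\mathscr{F}$ (e.g. assignment polytopes, where much richer diagonalizable families exist, as the text itself notes citing \cite{2011Kabadi,2017Punnen}). So in writing the proof I would make explicit that the statement is about the \emph{general} QCOP in the sense that no structural restriction on $\mathscr{F}$ is imposed — equivalently, one must handle the QUBO instance $\mathscr{F}=\{0,1\}^n$ — and that on such $\mathscr{F}$ the $2^n$ linear conditions "$\vec{x}^T\matr{Q}\vec{x}$ agrees with a linear function" collapse, via restriction to singletons and pairs, to exactly "$q_{ij}+q_{ji}=0$ for $i\neq j$."
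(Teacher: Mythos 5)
Your proof is correct and follows essentially the same route as the paper: both treat the converse by passing to the QUBO case ($\mathscr{F}=\{0,1\}^n$, i.e.\ the QCOP with no structural restriction on $\mathscr{F}$, which is also how the paper reads the statement) and evaluate the quadratic form at indicator vectors of singletons and pairs to force $q_{ij}+q_{ji}=0$ for all $i\neq j$. The only cosmetic difference is that the paper first forms $\matr{Q}-Diag(\matr{Q})$, symmetrizes it, and proves an auxiliary claim that a symmetric zero-diagonal diagonalizable matrix must vanish, whereas you extract the same condition directly from the evaluations at $\vec{e}_i$, $\vec{e}_j$, and $\vec{e}_i+\vec{e}_j$.
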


\begin{proof} Since $\matr{Y}$ is skew-symmetric, $\vec{x}^{T} \matr{Y} \vec{x} =0$ for any $\vec{x} \in  \mathscr{F}$ and hence  $\matr{Q} = \matr{Y} + \matr{U}$ is diagonalizable. Further, the diagonalization of such a $\matr{Q}$ is $diag(U)$. To prove the converse, it is enough to show that for the quadratic unconstrained binary optimization problem (QUBO), if $\matr{Q}$ is diagonalizable, then $\matr{Q}$ must be of the form $ \matr{Y} + \matr{U}$. Note that the family of feasible solutions for QUBO is $\{0,1\}^n$. First, we prove that for QUBO, if a quadratic cost matrix $\matr{Q}^{'}$ is  symmetric with diagonal entries  zero is diagonalizable, then $\matr{Q}^{'}$ must be the zero matrix. Suppose that is not true. Let the $(i,j)^{th}$ element $q_{ij}^{'} \not= 0$. Then by symmetry  $q_{ji}^{'} = q_{ij}^{'} \not= 0$. Now consider the solution

\begin{equation*}
x_k =
\begin{cases}
1 &\text{for $k=\ell$ for some $\ell$}\\
0 &\text{otherwise.}
\end{cases}
\end{equation*}

\noindent Let $\matr{D}$ be a diagonalization of $\matr{Q}^{'}$.  Then $\vec{x}^{T} \matr{Q}^{'} \vec{x} =  \vec{x}^{T} \matr{D} \vec{x}$ which implies $d_{\ell\ell} =0$. Since $\ell$ is arbitrary, $\matr{D}$  must be a zero matrix. Now consider the solution

\begin{equation*}
x_k =
\begin{cases}
1 &\text{for  $k=i,j$}\\
0 &\text{otherwise.}
\end{cases}
\end{equation*}

\noindent Then  $\vec{x}^{T} \matr{Q}^{'} \vec{x} = 2 q_{ij}^{'} \not=0$. Since $\matr{Q}^{'}$ is a diagonalizable, $\vec{x}^{T} \matr{Q}^{'} \vec{x} =  \vec{x}^{T} \matr{D} \vec{x} =0$  which implies $q^{\prime}_{ij}=0$, a contradiction. Thus for any symmetric cost matrix $\matr{Q}^{'}$ with diagonal  entries zero of a QCOP, if $\matr{Q}^{'}$  is diagonalizable then $\matr{Q}^{'}$ must be zero. Now take any cost matrix $\matr{Q}$ of a QCOP that is diagonalizable.  Let $\matr{\bar{Q}} = \matr{Q} - diag(\matr{Q}).$ Then $\matr{\bar{Q}}$ is diagonalizable and hence $\matr{\hat{Q}} = \frac{1}{2}\left(\matr{\bar{Q}}+\matr{\bar{Q}}^{T}\right)$ is diagonalizable.  But $\matr{\hat{Q}}$  is symmetric with diagonal entries zero. Then $\matr{\hat{Q}}$ must be a zero matrix and hence $\matr{\bar{Q}} = - \matr{\bar{Q}}^{T}$.   Thus $\matr{\bar{Q}}$ is skew symmetric. But $\matr{Q} = \matr{\bar{Q}} + diag(\matr{Q})$ and the result follows.
\end{proof}

Note that Theorem  \ref{thm-2.4} follows on a corollary of  Theorems~\ref{thma-2} and \ref{thma-3}.

%\begin{cor} \label{thma-4}If $\matr{Y}$ is a skew-symmetric matrix, $\matr{U}$ is a diagonal matrix, $\matr{Q}^{\prime}= \matr{Q}+\matr{Y}+\matr{U}$, and $\vec{c}^{\prime}=\vec{c}-diag(\matr{U}),$ then
%$(\matr{Q}^{\prime},\vec{c}^{\prime})$ is an equivalent representation of $(\matr{Q},\vec{c})$.
%\end{cor}

As observed earlier, imposing additional restrictions on the family of feasible solutions, more interesting characterizations for diagonalizability can be obtained~\cite{2017Ante,2017Hu,2011Kabadi,2017Punnen}. Let us now add a simple restriction that all elements of the underlying $\hat{F}$ have the same cardinality. The resulting QCOP is called the \textit{cardinality constrained quadratic combinatorial optimization problem} (QCOP-CC).\\

A matrix $\matr{P}$ is said to be a \textit{weak-sum matrix}~\cite{cb2} if there exists vectors $\vec{a},\vec{b}\in \mathbb{R}^n$ such that $p_{ij} = a_i+b_j$ for $i,j =1,2,\ldots , n, i\neq j$. Here $\vec{a}$ and $\vec{b}$ are called the \textit{generator vectors} of $\matr{P}$. Note that the sum of a weak-sum matrix and a diagonal matrix is a weak-sum matrix. For QCOP-CC we have the following characterization for diagonalizability.

\begin{thm} \label{thma-5} A quadratic cost matrix $\matr{Q}$ associated with a QCOP-CC is diagonalizable if and only if $\matr{Q} = \matr{P}+\matr{Y}$ where $\matr{P}$ is a weak-sum matrix and $\matr{Y}$ is a  skew-symmetric matrix.
\end{thm}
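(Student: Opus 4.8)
The plan is to prove the two implications separately: the forward direction is a short computation that does not even use the full structure of $\hat{F}$, while the converse follows the pattern of Theorem~\ref{thma-3} (reduce to a normal form, then read off the structure). For the ``if'' direction, suppose $\matr{Q}=\matr{P}+\matr{Y}$ with $\matr{P}$ a weak-sum matrix with generators $\vec{a},\vec{b}$ and $\matr{Y}$ skew-symmetric, and let $p$ be the common cardinality of the members of $\hat{F}$. For $\vec{x}\in\mathscr{F}$ we have $x_i^2=x_i$, $\sum_i x_i=p$, and $\vec{x}^T\matr{Y}\vec{x}=0$, so expanding $\vec{x}^T\matr{P}\vec{x}=\sum_i p_{ii}x_i+\sum_{i\neq j}(a_i+b_j)x_ix_j$ and using $x_i\sum_{j\neq i}x_j=(p-1)x_i$ on the support of $\vec{x}$ collapses the objective to the linear form $\vec{x}^T\matr{Q}\vec{x}=\sum_i\big(p_{ii}+(p-1)(a_i+b_i)\big)x_i$. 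Hence $\matr{Q}$ is diagonalizable, with diagonalization the diagonal matrix carrying the entries $p_{ii}+(p-1)(a_i+b_i)$.

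For the ``only if'' direction I would, exactly as in Theorem~\ref{thma-3}, first reduce to a clean normal form; as in the QUBO reduction of Theorem~\ref{thma-3}, the statement is to be understood for the full family $\mathscr{F}=\{\vec{x}\in\{0,1\}^n:\sum_i x_i=p\}$ with $2\le p\le n-2$ (so $n\ge4$; the values $p\in\{0,1,n-1,n\}$ are degenerate and excluded). Given a diagonalizable $\matr{Q}$, put $\bar{\matr{Q}}=\matr{Q}-Diag(\matr{Q})$; since diagonal matrices are diagonalizable and the diagonalizable matrices form a subspace, $\bar{\matr{Q}}$ is diagonalizable, and hence so is the symmetric, zero-diagonal matrix $\matr{S}=\tfrac12(\bar{\matr{Q}}+\bar{\matr{Q}}^T)$. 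Because $\matr{Q}=\matr{S}+\tfrac12(\bar{\matr{Q}}-\bar{\matr{Q}}^T)+Diag(\matr{Q})$ and the middle term is skew-symmetric, it suffices to show that $\matr{S}$ is a symmetric weak-sum matrix: then $\matr{P}:=\matr{S}+Diag(\matr{Q})$ is a weak-sum matrix (the sum of a weak-sum matrix and a diagonal matrix is a weak-sum matrix) and $\matr{Y}:=\tfrac12(\bar{\matr{Q}}-\bar{\matr{Q}}^T)$ completes the decomposition $\matr{Q}=\matr{P}+\matr{Y}$.

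The heart of the argument is thus to show that a symmetric, zero-diagonal, diagonalizable $\matr{S}$ satisfies $s_{ij}=e_i+e_j$ for $i\neq j$ and some $\vec{e}\in\RR^n$. Let $\matr{D}$ be a diagonalization, $d_i:=d_{ii}$; evaluating $\vec{x}^T\matr{S}\vec{x}=diag(\matr{D})\vec{x}$ on the incidence vector of a $p$-set $T$ gives $2\sum_{\{i,j\}\subseteq T}s_{ij}=\sum_{i\in T}d_i$. I would first prove that, for distinct $a,b$, the value $s_{ai}-s_{bi}$ is independent of $i\in E\setminus\{a,b\}$: given two such indices $i,i'$, choose $W\subseteq E\setminus\{a,b,i,i'\}$ with $|W|=p-2$, apply the displayed identity to the four $p$-sets $W\cup\{a,i\}$, $W\cup\{b,i\}$, $W\cup\{a,i'\}$, $W\cup\{b,i'\}$, and take the alternating combination; every term supported on $W$ cancels on the left, the $d$-side cancels entirely, and what remains is $s_{ai}-s_{bi}=s_{ai'}-s_{bi'}$. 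Writing $c_{ab}$ for this common value (so $c_{ba}=-c_{ab}$), the same identity with a common witness index $i\notin\{a,b,c\}$ yields the cocycle relation $c_{ac}=c_{ab}+c_{bc}$, whence $c_{ab}=g_a-g_b$ for a suitable $\vec{g}$. Then $s_{ai}-g_a=s_{bi}-g_b=:h_i$ does not depend on the first index, so $s_{ai}=g_a+h_i$ for $a\neq i$; symmetry $s_{ai}=s_{ia}$ forces $g_a-h_a$ to be a constant $2\gamma$, and with $e_i:=g_i-\gamma$ we obtain $s_{ij}=e_i+e_j$ for $i\neq j$.

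I expect the main obstacle to be the combinatorial lemma that $s_{ai}-s_{bi}$ is $i$-independent: this is the step that extracts the additive ``sum'' structure from bare diagonalizability, and it is also where the hypothesis that $\hat{F}$ contains \emph{all} $p$-subsets is genuinely used, so the degenerate ranges of $p$ (and small $n$, where there are too few $p$-sets to carry out the swapping) need separate, easy handling — for instance, when $p=2$ the identity $2s_{ij}=d_i+d_j$ already gives $s_{ij}=\tfrac12 d_i+\tfrac12 d_j$ at once.
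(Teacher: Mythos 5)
Your proof is correct, and the forward direction is essentially the paper's computation (expand the weak-sum part, use $x_i^2=x_i$ and $\sum_j x_j=K$ to collapse to the diagonal $d_{ii}=(K-1)(a_i+b_i)+p_{ii}$, with the skew part contributing nothing). Where you genuinely diverge is the converse. The paper also splits $\matr{Q}$ into its symmetric and skew-symmetric parts, but then disposes of the key step — ``symmetric and diagonalizable $\Rightarrow$ weak-sum'' — by invoking the \'{C}usti\'{c}--Punnen characterization of linearizable quadratic minimum spanning tree matrices on a complete graph \cite{2017Ante}, i.e.\ it reduces to the spanning-tree family as its ``special case of QCOP-CC.'' You instead reduce to the complete family of all $p$-subsets ($2\le p\le n-2$) and prove the weak-sum structure from scratch: the four-set swap $W\cup\{a,i\},W\cup\{b,i\},W\cup\{a,i'\},W\cup\{b,i'\}$ shows $s_{ai}-s_{bi}$ is independent of $i$, the cocycle identity gives $c_{ab}=g_a-g_b$, and symmetry pins down $s_{ij}=e_i+e_j$; I checked the alternating cancellation and the bookkeeping with $\gamma$, and they are sound, including the existence of $W$ exactly when $p\le n-2$. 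The trade-off: the paper's route is shorter and, by working over the sparser spanning-tree family, shows that even that family already forces the weak-sum form, but it rests entirely on an external nontrivial theorem; your route is self-contained and elementary and makes explicit how much richness of $\hat{F}$ the ``only if'' really needs, but it establishes necessity only for the full uniform family. Note that both you and the paper are silently repairing the same imprecision in the statement: necessity cannot hold for an arbitrary equal-cardinality family (a family with one or very few feasible sets makes every $\matr{Q}$ diagonalizable), so some generic choice of $\mathscr{F}$ must be fixed — the paper does it by passing to QMST, you by passing to all $p$-subsets, in the same spirit as the QUBO reduction in Theorem~\ref{thma-3}. Your separate remark on $p=2$ is harmless but not needed, since the general argument already works with $W=\emptyset$ there.
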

\begin{proof}
Let $K$ be the cardinality of elements in the underlying $\hat{F}$ defining the QCOP-CC instances. Suppose $\matr{Q} = \matr{P}+\matr{Y}$ where $\matr{P}$ is a weak-sum matrix and $\matr{Y}$ is a skew-symmetric matrix. Then
\begin{align*}
\vec{x}^T\matr{Q}\vec{x} &= \vec{x}^T\matr{P}\vec{x} + \vec{x}^T\matr{Y}\vec{x} \\
&= \sum_{i=1}^n\sum_{j=1}^n\left(a_i+b_j\right)x_ix_j -\sum_{i=1}^n\left(a_i+b_i\right)x_i + \sum_{i=1}^np_{ii}x_i\\
&= K\vec{a}\vec{x} + K\vec{b}\vec{x} - \vec{a}\vec{x} - \vec{b}\vec{x} + diag(\matr{P})\vec{x}\\
&=\left[(K-1)(\vec{a}+\vec{b})+diag(\matr{P})\right]\vec{x} = \vec{x}^T\matr{D}\vec{x},
\end{align*}
where $D$ is a diagonal matrix with $d_{ii} = (K-1)(a_i+b_i)+p_{ii}, i=1,2,\ldots ,n$. Thus $\matr{Q}$ is diagonalizable.\\

Conversely, suppose $\matr{Q}$ is diagonalizable. We will show that $\matr{Q}$ is of the required form given in the theorem. To establish this necessary condition, it is enough to establish it for a special case of QCOP(K). So, consider the quadratic minimum spanning tree problem (QMST) on a complete graph. Custic and Punnen~\cite{2017Ante} showed that a symmetric quadratic cost matrix associated with a QMST is diagonalizable if and only if it is a weak-sum matrix. Consider a quadratic cost matrix $\matr{Q}$ for the QMST. Now, $\matr{Q}$ is diagonalizable if and only if $\frac{1}{2}\left(\matr{Q}+\matr{Q}^T\right)$ is diagonalizable. Since $\frac{1}{2}\left(\matr{Q}+\matr{Q}^T\right)$ is symmetric, it follows from~\cite{2017Ante} that $\frac{1}{2}\left(\matr{Q}+\matr{Q}^T\right)$ is a weak-sum matrix. But $\matr{Q} = \frac{1}{2}\left(\matr{Q}+\matr{Q}^T\right) + \frac{1}{2}\left(\matr{Q}-\matr{Q}^T\right)$. Since $\frac{1}{2}\left(\matr{Q}-\matr{Q}^T\right)$ is skew-symmetric, the result follows.
\end{proof}

\begin{cor} \label{thma-6}Let $\matr{Y}$ be a skew-symmetric matrix, $\matr{U}$ be a diagonal matrix, and $\matr{P}$ be a weak-sum matrix with generator vectors $\vec{a}$ and $\vec{b}$. If $\matr{Q}^{\prime}= \matr{Q}+\matr{Y}+\matr{U}+\matr{P}$, and $\vec{c}^{\prime}=\vec{c}-diag(\matr{U})-(K-1)(\vec{a}+\vec{b})-diag(\matr{P})$ then
$(\matr{Q}^{\prime},\vec{c}^{\prime})$ is an equivalent representation of $(\matr{Q},\vec{c})$ for QCOP-CC, where $K$ is the fixed cardinality of elements of $\hat{F}$ defining the QCOP-CC instances.
\end{cor}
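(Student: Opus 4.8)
The plan is to exhibit $\matr{Y}+\matr{U}+\matr{P}$ as a diagonalizable matrix associated with the QCOP-CC and then invoke Corollary~\ref{cora-2} (equivalently, Theorem~\ref{thma-2}). First I would recall that each of the three summands is individually diagonalizable with respect to the feasible family $\mathscr{F}$ of the QCOP-CC: a skew-symmetric $\matr{Y}$ has the zero matrix as a diagonalization (already noted after Theorem~\ref{thma-3}); a diagonal $\matr{U}$ is trivially its own diagonalization; and a weak-sum matrix $\matr{P}$ is diagonalizable by Theorem~\ref{thma-5} (the case with zero skew-symmetric part). The only quantity that needs an explicit formula is the diagonalization of $\matr{P}$.

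To compute it, I would repeat the calculation already carried out in the proof of Theorem~\ref{thma-5}. For any $\vec{x}\in\mathscr{F}$ we have $\sum_{i=1}^n x_i = K$ and $x_i^2 = x_i$, so
\begin{align*}
\vec{x}^T\matr{P}\vec{x} &= \sum_{i=1}^n\sum_{j=1}^n (a_i+b_j)x_ix_j - \sum_{i=1}^n (a_i+b_i)x_i + \sum_{i=1}^n p_{ii}x_i\\
&= \bigl[(K-1)(\vec{a}+\vec{b}) + diag(\matr{P})\bigr]\vec{x},
\end{align*}
so a diagonalization $\matr{D}^P$ of $\matr{P}$ satisfies $diag(\matr{D}^P) = (K-1)(\vec{a}+\vec{b}) + diag(\matr{P})$. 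Combined with the zero diagonalization of $\matr{Y}$ and the fact that $\matr{U}$ is its own diagonalization, the matrix $\matr{A}:=\matr{Y}+\matr{U}+\matr{P}$ is diagonalizable with $diag(\matr{D}) = diag(\matr{U}) + (K-1)(\vec{a}+\vec{b}) + diag(\matr{P})$.

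Then I would apply Corollary~\ref{cora-2} with $m=3$, $\matr{A}^1=\matr{Y}$, $\matr{A}^2=\matr{U}$, $\matr{A}^3=\matr{P}$ and all $\alpha_i=1$ (or Theorem~\ref{thma-2} directly with the single matrix $\matr{A}$ above): this yields that $(\matr{Q}+\matr{A},\ \vec{c}-diag(\matr{D}))$ is an equivalent representation of $(\matr{Q},\vec{c})$. Since $\matr{Q}+\matr{A} = \matr{Q}+\matr{Y}+\matr{U}+\matr{P} = \matr{Q}^{\prime}$ and $\vec{c}-diag(\matr{D}) = \vec{c} - diag(\matr{U}) - (K-1)(\vec{a}+\vec{b}) - diag(\matr{P}) = \vec{c}^{\prime}$, the corollary follows.

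There is no genuine obstacle here; the argument is bookkeeping layered on Theorem~\ref{thma-5} and Corollary~\ref{cora-2}. The one point that warrants care is that the weak-sum identity $p_{ij}=a_i+b_j$ holds only off the diagonal, so the entries $p_{ii}$ must be tracked separately — this is precisely why the term $diag(\matr{P})$ appears in $\vec{c}^{\prime}$ — and that the reduction of $\sum_j (a_i+b_j)x_j$ to a linear form crucially uses the cardinality constraint $\sum_j x_j = K$, which is where the QCOP-CC hypothesis (as opposed to a general QCOP) is essential.
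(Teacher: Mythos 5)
Your proof is correct and takes essentially the route the paper intends: the corollary is stated there without an explicit proof, as an immediate consequence of the diagonalization computation carried out in the forward direction of Theorem~\ref{thma-5} combined with Theorem~\ref{thma-2} (or Corollary~\ref{cora-2}), which is precisely your argument, including the correct handling of the diagonal entries $p_{ii}$ and the use of the cardinality constraint $\sum_j x_j = K$.
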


A cost vector $\vec{a}$ of a \textit{linear combinatorial optimization problem} (LCOP) satisfies \textit{constant value property} (CVP)~\cite{cb2} if $\vec{a}\vec{x}=b$ for all $\vec{x} \in \mathscr{F}$ and some constant $b$. Characterization cost matrices associated with a linear combinatorial optimization problem satisfying CVP has been studied extensively in literature for various special cases. This include the travelling salesman problem~\cite{bp1,berenguer79,gb,kp1}, assignment problem~\cite{gb}, spanning tree problem~\cite{cb2}, shortest path problem~\cite{cb2}, multidimensional assignment problem~\cite{cb2,cb1}  etc.
Consider a QCOP with family of feasible solutions $\mathscr{F}$. Suppose each of the vectors $\vec{a}^1,\vec{a}^2,\ldots ,\vec{a}^p$ satisfies CVP with respect to $\mathscr{F}$. Then  $\vec{a}^i\vec{x} =b_i$ for all $\vec{x}\in \mathscr{F}$, $i=1,2,\ldots ,p$ where $b_i, i=1,2,\ldots p$ are some constants. Using these natural equalities, we can apply the QCR method of Billionnet et al~\cite{2004Billionnet,ba10} or P\"{o}rn et al~\cite{ba2} to generate strong reformulations of QCOP with appropriate restrictions on $\mathscr{F}$. It may be noted that the vectors satisfying CVP for an LCOP form a subspace of $R^n$. Thus, in particular, if we choose $\vec{a}^1,\vec{a}^2,\ldots ,\vec{a}^p$ as a basis for this subspace, strong reformulations can be obtained using the QCR method of Billionnet et al~\cite{ba10} or of P\"{o}rn et al~\cite{ba2}.\\

Vectors satisfying CVP can also be used to generate diangonalizable matrices in a natural way which in turn can be used to generate strong reformulations as discussed earlier. To see this, let $\vec{a}^1,\vec{a}^2,\ldots ,\vec{a}^n$ be a collection of cost vectors (not necessarily distinct) satisfying CVP with respect to $\mathscr{F}$ with respective constant values $b_1,b_2,\ldots ,b_n$. Let $\matr{A}$ be the matrix with its \th{$i$} row is $\vec{a}^i$. Then $\matr{A}$ is diagonalizable and $\matr{D}$ with $d_{ii}=b_i$ is its diagonalization. Diagonalizable matrices generated this way can be used to obtain equivalent representations as discussed in Theorem~\ref{thma-2}. Let us now observe that vectors satisfying CVP can also be used to obtain Billionnet et al~\cite{ba10} type equivalent representations for QCOP.\\

Suppose $\vec{a}=(a_1,a_2,\ldots ,a_n)\in \mathbb{R}^n$ satisfies CVP for the family $\mathscr{F}$ of feasible solutions of a QCOP with $b$ as the constant value. Then $\vec{a}\vec{x} = b$ for all $\vec{x} \in \mathscr{F}$. Create $n$ copies of this equation and multiply both sides of the \th{$i$} equation by  $\alpha_ix_i$, for $i=1,2,\ldots ,n$ where $\alpha_i$ is a scalar. Adding these equations give $\sum_{i=1}^n\sum_{j=1}^n\alpha_ia_jx_ix_j = b\sum_{i=1}^n\alpha_ix_i$. This can be written as $\vec{x}^T\left(\vec{\alpha}^T\vec{a}\right)\vec{x} = b\vec{\alpha}\vec{x}$, where $\vec{\alpha}=(\alpha_1,\alpha_2,\ldots ,\alpha_n)$.

\begin{thm}
Let $\matr{G} = \frac{1}{2}\left(\vec{\alpha}^T\vec{a} + \vec{a}^T\vec{\alpha}\right)$. Then $(\matr{Q}+\matr{G}+\matr{D}, \vec{c}-b\vec{\alpha}-diag(\matr{D}))$ is an equivalent representation of $(\matr{Q},\vec{c})$ where $\matr{D}$ is any diagonal matrix.
\end{thm}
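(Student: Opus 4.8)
The plan is to recognize $\matr{G}$, and more generally $\matr{G}+\matr{D}$, as a diagonalizable matrix associated with the QCOP, and then to quote Theorem~\ref{thma-2}; all the real content is in identifying the correct diagonalization and doing the $diag$ bookkeeping.

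First I would invoke the identity $\vec{x}^T\left(\vec{\alpha}^T\vec{a}\right)\vec{x} = b\vec{\alpha}\vec{x}$ derived just above the statement, which holds for every $\vec{x}\in\mathscr{F}$. Because $\vec{x}^T\matr{M}\vec{x}=\vec{x}^T\frac{1}{2}\left(\matr{M}+\matr{M}^T\right)\vec{x}$ for any $\matr{M}\in\mathbb{M}^n$ (the same observation behind Corollary~\ref{cora-1}), taking $\matr{M}=\vec{\alpha}^T\vec{a}$ yields $\vec{x}^T\matr{G}\vec{x}=\vec{x}^T\left(\vec{\alpha}^T\vec{a}\right)\vec{x}=b\vec{\alpha}\vec{x}$ for all $\vec{x}\in\mathscr{F}$.

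Next, using $x_i^2=x_i$ I would rewrite $b\vec{\alpha}\vec{x}=\sum_{i=1}^n b\alpha_i x_i = \vec{x}^T Diag(b\vec{\alpha})\vec{x}$ on $\mathscr{F}$, so that $\matr{G}$ is diagonalizable with diagonalization $Diag(b\vec{\alpha})$. Since $\matr{D}$ is diagonal it is trivially diagonalizable (with itself as a diagonalization), so by the subspace property of diagonalizable matrices — equivalently, by Corollary~\ref{cora-2} applied to $\matr{G}$ and $\matr{D}$ — the matrix $\matr{A}:=\matr{G}+\matr{D}$ is diagonalizable, and one of its diagonalizations is $Diag(b\vec{\alpha})+\matr{D}$, whose diagonal vector is $b\vec{\alpha}+diag(\matr{D})$. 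Applying Theorem~\ref{thma-2} to this $\matr{A}$ then gives that $\left(\matr{Q}+\matr{G}+\matr{D},\,\vec{c}-\big(b\vec{\alpha}+diag(\matr{D})\big)\right)$ is an equivalent representation of $(\matr{Q},\vec{c})$, which is exactly the claim.

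I do not expect a genuine obstacle here; the only points requiring care are checking that $\vec{x}^T\matr{G}\vec{x}$ really collapses to a \emph{linear} function of $\vec{x}$ over $\mathscr{F}$ (so that a \emph{diagonal} matrix can serve as its diagonalization) and keeping the $diag$/$Diag$ bookkeeping consistent. If one prefers to avoid invoking the subspace property, a one-line direct verification works just as well: substitute into $f\big(\matr{Q}+\matr{G}+\matr{D},\,\vec{c}-b\vec{\alpha}-diag(\matr{D}),\,\vec{x}\big)$ and cancel the $b\vec{\alpha}\vec{x}$ and $diag(\matr{D})\vec{x}$ terms against $\vec{x}^T\matr{G}\vec{x}$ and $\vec{x}^T\matr{D}\vec{x}$; alternatively, absorb $\matr{G}$ first via Theorem~\ref{thma-2} and then absorb $\matr{D}$ via Theorem~\ref{thm-2.4} with skew-symmetric part equal to the zero matrix.
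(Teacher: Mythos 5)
Your proof is correct and follows essentially the same route the paper intends: the paper's "follows from the previous discussions" is exactly the identity $\vec{x}^T\left(\vec{\alpha}^T\vec{a}\right)\vec{x} = b\vec{\alpha}\vec{x}$ combined with $x_i^2=x_i$ and the absorption machinery of Theorem~\ref{thma-2}/Theorem~\ref{thm-2.4}, which is precisely what you spell out (noting correctly that $\matr{G}$ is the symmetric part of $\vec{\alpha}^T\vec{a}$, so the quadratic forms agree). Your identification of $Diag(b\vec{\alpha})+\matr{D}$ as a diagonalization of $\matr{G}+\matr{D}$ and the resulting $diag$ bookkeeping match the claimed linear-term adjustment $\vec{c}-b\vec{\alpha}-diag(\matr{D})$ exactly.
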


The proof of the theorem follows from the previous discussions. As a corollary, we have

\begin{cor}\label{cpa}
Let $\vec{a}^1,\vec{a}^2,\ldots ,\vec{a}^p$ be $p$ vectors satisfying CVP for solutions in $\mathscr{F}$ with respective constant values $b_1,b_2,\ldots ,b_n$ and $\vec{\alpha}^1,\vec{\alpha}^2,\ldots ,\vec{\alpha}^p$ are vectors in $\mathbb{R}^n$. Let $\matr{G}^i = \frac{1}{2}\left((\vec{\alpha}^i)^T\vec{a}^i + (\vec{a}^i)^T\vec{\alpha}^i\right)$ for $i=1,2,\ldots ,n.$ Then $(\matr{Q}+D+\sum_{i=1}^p\matr{G}^i, \vec{c}-\sum_{i=1}^pb_i\vec{\alpha}^i-diag(\matr{D}))$ is an equivalent representation of $(\matr{Q},\vec{c})$.
\end{cor}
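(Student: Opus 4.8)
The plan is to obtain Corollary~\ref{cpa} as a bookkeeping consequence of the theorem immediately preceding it, routed through Corollary~\ref{cora-2} so that the argument is visibly an instance of the diagonalization machinery of this section. First I would fix $i\in\{1,\dots,p\}$ and recall the identity already derived in the paragraph before that theorem: since $\vec{a}^i$ satisfies CVP with constant value $b_i$, we have $\vec{a}^i\vec{x}=b_i$ for all $\vec{x}\in\mathscr{F}$, and creating $n$ copies of this equation, multiplying the \th{$j$} copy by $\alpha^i_j x_j$, and summing yields $\vec{x}^T\big((\vec{\alpha}^i)^T\vec{a}^i\big)\vec{x}=b_i\,\vec{\alpha}^i\vec{x}$ for all $\vec{x}\in\mathscr{F}$. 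Because a scalar equals its transpose, $\vec{x}^T M\vec{x}=\vec{x}^T M^T\vec{x}$ for every $M$, so replacing $(\vec{\alpha}^i)^T\vec{a}^i$ by its symmetrization $\matr{G}^i=\tfrac12\big((\vec{\alpha}^i)^T\vec{a}^i+(\vec{a}^i)^T\vec{\alpha}^i\big)$ leaves the quadratic form unchanged on $\mathscr{F}$, giving $\vec{x}^T\matr{G}^i\vec{x}=b_i\,\vec{\alpha}^i\vec{x}$. In the terminology of Section~2 this says precisely that $\matr{G}^i$ is a diagonalizable matrix associated with the QCOP, with diagonalization $\matr{D}^i:=b_i\,Diag(\vec{\alpha}^i)$, whose diagonal vector is $diag(\matr{D}^i)=b_i\vec{\alpha}^i$.

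Next I would bring in the extra diagonal matrix $\matr{D}$ appearing in the statement. It is diagonalizable with diagonalization $\matr{D}$ itself, since $x_j^2=x_j$ gives $\vec{x}^T\matr{D}\vec{x}=diag(\matr{D})\vec{x}$ for all $\vec{x}\in\mathscr{F}$. Now apply Corollary~\ref{cora-2} to the collection of diagonalizable matrices $\matr{G}^1,\dots,\matr{G}^p,\matr{D}$ with all scalar coefficients equal to $1$: its conclusion is exactly that $\big(\matr{Q}+\matr{D}+\sum_{i=1}^p\matr{G}^i,\ \vec{c}-diag(\matr{D})-\sum_{i=1}^p b_i\vec{\alpha}^i\big)$ is an equivalent representation of $(\matr{Q},\vec{c})$, which is the asserted statement. (Equivalently, one can avoid quoting Corollary~\ref{cora-2} and just expand $f$ directly: substituting $\vec{x}^T\matr{G}^i\vec{x}=b_i\vec{\alpha}^i\vec{x}$ and $\vec{x}^T\matr{D}\vec{x}=diag(\matr{D})\vec{x}$ into $\vec{x}^T\big(\matr{Q}+\matr{D}+\sum_i\matr{G}^i\big)\vec{x}+\big(\vec{c}-diag(\matr{D})-\sum_i b_i\vec{\alpha}^i\big)\vec{x}$ makes the two perturbation blocks cancel in pairs, leaving $\vec{x}^T\matr{Q}\vec{x}+\vec{c}\vec{x}=f(\matr{Q},\vec{c},\vec{x})$ for every $\vec{x}\in\mathscr{F}$.)

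There is no genuinely hard step; the corollary merely iterates the $p=1$ theorem and absorbs one arbitrary diagonal perturbation. The only points needing a line of care are the symmetrization step — verifying that passing from $(\vec{\alpha}^i)^T\vec{a}^i$ to $\matr{G}^i$ alters neither the quadratic form on $\mathscr{F}$ nor the diagonalization — and, if the argument is phrased via Corollary~\ref{cora-2}, observing that a diagonal matrix is its own diagonalization so that $\matr{D}$ may be folded into the summed family. In writing it up I would also correct the evident typographical slips in the statement: the constant values should be $b_1,\dots,b_p$ (not $b_1,\dots,b_n$), the index range for $\matr{G}^i$ should be $i=1,\dots,p$, and "$D$" in the displayed representation should read "$\matr{D}$", an arbitrary diagonal matrix, exactly as in Theorem~\ref{thma-2} and in the theorem of which this is a corollary.
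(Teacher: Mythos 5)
Your proposal is correct and follows essentially the route the paper intends: the corollary is just the preceding theorem (equivalently, the identity $\vec{x}^T\matr{G}^i\vec{x}=b_i\vec{\alpha}^i\vec{x}$ derived from CVP together with $\vec{x}^T\matr{D}\vec{x}=diag(\matr{D})\vec{x}$) iterated over $i=1,\ldots,p$, whether phrased through Corollary~\ref{cora-2} or by direct expansion of $f$. Your observations about the typographical slips ($b_1,\ldots,b_p$, the index range $i=1,\ldots,p$, and $\matr{D}$) are also accurate.
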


It can be verified that the equivalent representations given by Corollary~\ref{cpa} are precisely of Billionnet et al~\cite{ba10} type. Following the ideas of Billionnet et al~\cite{ba10}, when $\matr{Q}$ is symmetric, the best values of $\vec{\alpha}^1,\vec{\alpha}^2,\ldots ,\vec{\alpha}^p$ and $\matr{D}$ (in terms of strong continuous relaxations) can be identified by solving an appropriate semidefinite program and its dual with suitable assumptions on the representation of $\mathscr{F}$. Further, by choosing $\vec{\alpha}^1,\vec{\alpha}^2,\ldots ,\vec{\alpha}^p$ as a basis  of the subspace of $R^n$ obtained by vectors satisfy CVP,  strong Billionnet et al~\cite{ba10} type  representation can be obtained. \\

Let us now examine some simple equivalent representations generated by various choices of $\matr{Y}$ and $\matr{U}$ in Theorem~\ref{thm-2.4} along with associated properties. Many of these representations are well known in the context of various special cases of QCOP. We summarize them below  with some elucidating remarks.

\begin{enumerate}

\item[1.] \textbf{Diagonal annihilation}: In Theorem~\ref{thm-2.4}, choose  $\matr{U}$ such that $u_{ii} = -q_{ii}$ for $i=1,\hdots,n$, and $\matr{Y}$ as the zero matrix. Then the diagonal elements of the resulting $\matr{Q}^{\prime}$ are zeros. We call this operation of constructing the equivalent representation $(\matr{Q}^{\prime},\vec{c}^{\prime})$ from $(\matr{Q},\vec{c})$  as \textit{diagonal annihilation}.

Although diagonal annihilation is a simple operation, some important properties of $\matr{Q}$ and $\matr{Q}^{\prime}$ could be very different. For example, the difference between the rank of these matrices could be arbitrarily  large. One matrix could be positive semidefinite while the other could be negative semidefinite or indefinite. The symmetry property, if exists, is preserved under this transformation.
If $q_{ii} \geq 0$ for all $i=1,2,\dots ,n$ then it can be verified that
$f_{R}(\matr{Q},\vec{c},\vec{x}) \leq f_{R}(\matr{Q}^{\prime},\vec{c}^{\prime},\vec{x})\mbox{ for all } \vec{x}\in \mathscr{F}_R$. Similarly, If $q_{ii} \leq 0$ for all $i=1,2,\dots ,n$ then
$f_{R}(\matr{Q},\vec{c},\vec{x}) \geq f_{R}(\matr{Q}^{\prime},\vec{c}^{\prime},\vec{x})\mbox{ for all } \vec{x}\in \mathscr{F}_R$.

 Since some of the crucial properties of $\matr{Q}$ could be altered by diagonal annihilation, the computational impact of this transformation  needs to be analyzed carefully.

\item[2.] \textbf{Linear term annihilation:} In Theorem~\ref{thm-2.4}, choose  $\matr{U}$ such that $u_{ii} = c_i$ for $i=1,\hdots,n$, and $\matr{Y}$ as the zero matrix. Then the resulting $\vec{c}^{\prime}$ is the zero vector of size $n$. We call this operation of constructing the equivalent representation $(\matr{Q}^{\prime},\vec{c}^{\prime})$ from $(\matr{Q},\vec{c})$  as \textit{linear term annihilation}.

    As in the case of diagonal annihilation, under this simple transformation, if $\matr{Q}$ is symmetric then  $\matr{Q}^{\prime}$ is also symmetric. However, properties such as rank, positive (negative) definiteness could be altered by the transformation. If $c_{i} \geq 0$ for all $i=1,2,\dots ,n$ then it can be verified that $f_{R}(\matr{Q},\vec{c},\vec{x}) \leq f_{R}(\matr{Q}^{\prime},\vec{c}^{\prime},\vec{x})\mbox{ for all } \vec{x}\in \mathscr{F}_R$. Similarly, If $c_{i} \leq 0$ for all $i=1,2,\dots ,n$ then
$f_{R}(\matr{Q},\vec{c},\vec{x}) \geq f_{R}(\matr{Q}^{\prime},\vec{c}^{\prime},\vec{x})\mbox{ for all } \vec{x}\in \mathscr{F}_R$.

Again the impact of this transformation on computational performance is not obvious and needs to abe analyzed carefully.

\item[3.] \textbf{Convexification:} In Theorem~\ref{thm-2.4}, choose  $\matr{U}$ such that $u_{ii} = M$ for $i=1,\hdots,n$ where $M$ is a nonnegative number, and $\matr{Y}$ as the zero matrix. We call this operation of constructing the equivalent representation $(\matr{Q}^{\prime},\vec{c}^{\prime})$ from $(\matr{Q},\vec{c})$  as \textit{convexification}.

     Note that by choosing $M$ sufficiently large, we can make $\matr{Q}^{\prime}$ positive semi-definite and hence the objective function of the continuous relaxation of the QCOP instance $(\matr{Q}^{\prime},\vec{c}^{\prime})$ is convex. In this case, it can be verified that $f_{R}(\matr{Q},\vec{c},\vec{x}) \geq f_{R}(\matr{Q}^{\prime},\vec{c}^{\prime},\vec{x})\mbox{ for all } \vec{x}\in \mathscr{F}_R$. However, we can solve the continuous relaxation of the instance $(\matr{Q}^{\prime},\vec{c}^{\prime})$ of QCOP in polynomial time whenever $\mathscr{F}$ have a compact representation using linear inequalities or an associated separation problem can be solved in polynomial time.  The transformation could alter the rank and for smaller values of $M$ it could affect properties such as positive semidefinite, negative semidefinite etc. of $\matr{Q}$, if existed.

     If $\matr{Q}$ is symmetric and not positive semidefinite, choosing $M$ to be the negative of its smallest eigenvalue is sufficient to make $\matr{Q}^{\prime}$ positive semidefinite~\cite{1970Hammer}. To choose such an $M$, additional computational effort is required. We also discussed earlier various other convexification strategies. However, by the transformation "convexification" we simply mean the simple operation indicated above by choosing $M$ sufficiently large.

\item[4.] \textbf{Concavification:} In Theorem~\ref{thm-2.4}, choose  $\matr{U}$ such that $u_{ii} = -M$ for $i=1,\hdots,n$ where $M$ is a nonnegative number, and $\matr{Y}$ as the zero matrix. We call this operation of constructing the equivalent representation $(\matr{Q}^{\prime},\vec{c}^{\prime})$ from $(\matr{Q},\vec{c})$  as \textit{concavification}.

     Note that by choosing $M$ sufficiently large, we can make $\matr{Q}^{\prime}$ negative semidefinite and hence the objective function of the continuous relaxation of the QCOP instance $(\matr{Q}^{\prime},\vec{c}^{\prime})$ is concave. In this case, QCOP is equivalent to its continuous relaxation. To see this, consider the quadratic programming problem (QPP($\mathscr{F}_R$))

\begin{align*}
\nonumber \mbox{~~ Minimize~ } & \sum_{i=1}^{n}\sum_{j=1}^{n} q_{ij} x_ix_j + \sum_{i=1}^{n} c_i x_i + M x_i(1-x_i)\\
\text{Subject to~~} & \vec{x} \in \mathscr{F}_R
\end{align*}

Since $M$ large, there always exist a binary optimal solution to QPP when $\mathscr{F}_R$ is a polyhedral set. Thus QPP($\mathscr{F}_R$) is equivalent to QPP($\mathscr{F}$) which is obtained from QPP($\mathscr{F}_R$) by replacing $\mathscr{F}_R$ with $\mathscr{F}$. Now replacing $x_i^2$ by $x_i$ (which is valid for binary variables) in the objective objective function of QPP($\mathscr{F}$) and simplifying, we get the instance  $(\matr{Q}^{\prime},\vec{c}^{\prime})$ of QCOP.

This observation shows that solving the continuous relaxation of $(\matr{Q}^{\prime},\vec{c}^{\prime})$ is as hard as solving QCOP.

\item[5.] \textbf{Triangularization:} In Theorem~\ref{thm-2.4}, choose  $\matr{U}$ such that $u_{ii} = -q_{ii}$ for $i=1,\hdots,n$  and $\matr{Y}$ such that
     \begin{equation*}
y_{ij} =
\begin{cases}
q_{ji} &\text{if  $j > i$}\\
-q_{ij} &\text{if $j < i$}\\
0 & \text{otherwise}.
\end{cases}
\end{equation*}
 Then the resulting matrix $\matr{Q}^{\prime}$ is upper triangular with its diagonal elements are zeros. We call this operation of constructing the equivalent representation $(\matr{Q}^{\prime},\vec{c}^{\prime})$ from $(\matr{Q},\vec{c})$  as \textit{triangularization}. Again, triangularization could affect rank and properties such as positive (negative) semidefiniteness, if exists for $\matr{Q}$.

\end{enumerate}

Applying the Theorems discusses above, in different combinations, many simple equivalent representations of a QCOP can be developed and studied. Since our focus in this paper is on computational effects of simple and commonly used  equivalent representations, to manage the study effectively, we restrict to ourselves to six equivalent transformations obtained by symmetrization, diagonal annihilation, linear term annihilation, convexification, concavification, and triangularization in appropriate combinations.

\subsection{Equivalent representations and the natural lower bound}

Effects of equivalent representations on lower bounds obtained by continuous relaxations of a QCOP was discussed in the previous subsection. Let us now consider another lower bound which is a generalization of the well known  Gilmore-Lawler \cite{1962Gilmore,1963Lwaler} lower bound for the quadratic assignment problem and its variations~\cite{1992Assad}. We call this bound a \textit{natural lower bound} for QCOP.

For $k=1,2,\ldots ,n$ let
 \begin{equation*}
 l_k = c_k + \min\left\{\sum_{j=1}^{n} q_{kj} x_j: \vec{x} \in \mathscr{F}, x_k=1\right\} \; \mbox{  and  }    \; m_k = c_k + \min\left\{\sum_{i=1}^{n} q_{ik} x_i : \vec{x} \in \mathscr{F}, x_k=1\right\}.
\end{equation*}

Also, let
 \begin{equation*}
\alpha = \min_{\vec{x} \in \mathscr{F}}    \sum_{k=1}^{n} l_k x_k   \; \; \; \mbox{  and }  \; \; \;  \beta = \min_{\vec{x} \in \mathscr{F}}    \sum_{k=1}^{n} m_k x_k.
\end{equation*}

\begin{thm}
 $\max\{\alpha,\beta\}$ is a lower bound for the optimal objective function value of QCOP.
\end{thm}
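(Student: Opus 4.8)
The plan is to show that both $\alpha$ and $\beta$ are, individually, lower bounds for the optimal value of the QCOP; the claim then follows since the maximum of two lower bounds is a lower bound. By symmetry (the role of $\matr{Q}$ and $\matr{Q}^T$), it suffices to handle $\alpha$. So fix an arbitrary feasible $\vec{x}^\ast \in \mathscr{F}$ and let its support be $S = \{k : x_k^\ast = 1\}$; I want to show $f(\matr{Q},\vec{c},\vec{x}^\ast) \geq \alpha$.

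First I would rewrite the objective value as $f(\matr{Q},\vec{c},\vec{x}^\ast) = \sum_{k \in S}\bigl(c_k + \sum_{j \in S} q_{kj}\bigr) = \sum_{k=1}^n x_k^\ast\bigl(c_k + \sum_{j=1}^n q_{kj} x_j^\ast\bigr)$. Now for each $k \in S$, the vector $\vec{x}^\ast$ is itself a feasible point with $x_k^\ast = 1$, so it is admissible in the minimization defining $l_k$; hence $c_k + \sum_{j=1}^n q_{kj} x_j^\ast \geq l_k$. Summing the inner bracket over $k \in S$ (equivalently, multiplying by $x_k^\ast \geq 0$ and summing over all $k$) gives $f(\matr{Q},\vec{c},\vec{x}^\ast) \geq \sum_{k=1}^n l_k x_k^\ast$. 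Finally, $\vec{x}^\ast$ is feasible for the outer minimization defining $\alpha$, so $\sum_{k=1}^n l_k x_k^\ast \geq \alpha$. Chaining these inequalities yields $f(\matr{Q},\vec{c},\vec{x}^\ast) \geq \alpha$, and since $\vec{x}^\ast$ was arbitrary, $\alpha$ is a lower bound on the optimum. The identical argument with $q_{ik}$ in place of $q_{kj}$ (i.e. working with columns of $\matr{Q}$, which corresponds to applying the same reasoning to the equivalent representation $(\matr{Q}^T,\vec{c})$ from Remark~\ref{rma-1}) shows $\beta$ is a lower bound, and therefore so is $\max\{\alpha,\beta\}$.

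The only subtle point — and the step I would be most careful about — is the innocuous-looking passage ``multiplying by $x_k^\ast$ and summing.'' This is valid precisely because $x_k^\ast \in \{0,1\} \geq 0$, so multiplying the inequality $c_k + \sum_j q_{kj} x_j^\ast \geq l_k$ by $x_k^\ast$ preserves its direction; for $k \notin S$ the term simply vanishes on both sides and contributes nothing, so no spurious slack is introduced. One should also note that the sets $\{\vec{x} \in \mathscr{F} : x_k = 1\}$ are nonempty exactly for those $k$ that lie in the support of some feasible solution — and for any such $k$ we only ever invoke $l_k$ when $x_k^\ast = 1$, so the minima in question are always over nonempty finite sets and are well defined. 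No convexity, integrality of the relaxation, or structural assumption on $\mathscr{F}$ beyond finiteness is needed; this is the standard Gilmore–Lawler bounding argument transcribed to the QCOP setting.
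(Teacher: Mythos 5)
Your proof is correct and follows essentially the same argument as the paper: decompose the objective over the support, bound each term $c_k+\sum_j q_{kj}x_j$ below by $l_k$ using feasibility of the chosen solution in the inner minimization, and then bound $\sum_k l_k x_k$ below by $\alpha$ via the outer minimization, with the transposed argument giving $\beta$. The extra remarks on nonnegativity of the multipliers and nonemptiness of the inner feasible sets are fine but do not change the route.
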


\begin{proof} Let $\vec{x}^0$ be an optimal solution to the QCOP instance $(\matr{Q},\vec{c})$ and $T = \{j : x_j^0 =1\}$. Then
\begin{align*}
f(\matr{Q},\vec{c},\vec{x}^0) &=\sum_{k\in T}\left(c_k + \sum_{j=1}^nq_{kj}x_j^0\right)
\geq \sum_{k\in T} l_k \geq \min\left\{\sum_{k=1}^nl_kx_j : \vec{x} \in \mathscr{F}\right\} = \alpha
\end{align*}
Similarly, one can show that $f(\matr{Q},\vec{c},\vec{x}^0) \geq \beta$ and the result follows.
\end{proof}

Note that each of the values $l_k,m_k$ for $k=1,2,\ldots ,n$ and $\alpha$ and $\beta$ can be identified by solving an associated \textit{linear combinatorial optimization problem} (LCOP). Thus, to identify the natural lower bound for a QCOP, we need to solve $2n+1$ LCOP. If the $\matr{Q}$ is symmetric, $\alpha = \beta$ and in this case, one need to solve only $n+1$ LCOP to identify the natural lower bound. For problems such as quadratic assignment or quadratic spanning tree, this LCOP can be solved efficiently in polynomial time. However, for some other examples such as the quadratic traveling salesman problem and the quadratic set covering problem, this LCOP itself is NP-hard. In such cases, one may be interested in using lower bounds on $l_k$ and $m_k$ and/or lower bounds for $\alpha$ and $\beta$. More specifically,

For $k=1,2,\ldots ,n$ let
 \begin{equation*}
 l^R_k = c_k + \min\left\{\sum_{j=1}^{n} q_{kj} x_j: \vec{x} \in \mathscr{F}_R, x_k=1\right\} \; \mbox{  and  }    \; m^R_k = c_k + \min\left\{\sum_{i=1}^{n} q_{ik} x_i : \vec{x} \in \mathscr{F}_R, x_k=1\right\}.
\end{equation*}

Also, let
 \begin{equation*}
\alpha^R = \min\left\{\sum_{k=1}^{n} \left\lceil l^R_k\right\rceil x_k : \vec{x} \in \mathscr{F}_R\right\}   \; \; \; \mbox{  and }  \; \; \;  \beta^R = \min\left\{ \sum_{k=1}^{n} \left\lceil m^R_k\right\rceil x_k : \vec{x} \in \mathscr{F}_R\right\}
\end{equation*}

and
 \begin{equation*}
\alpha^R_1 = \min\left\{\sum_{k=1}^{n}\left\lceil l^R_k\right\rceil x_k : \vec{x} \in \mathscr{F}\right\}  \; \; \; \mbox{  and }  \; \; \;  \beta^R_1 = \min\left\{\sum_{k=1}^{n} \left\lceil m^R_k\right\rceil x_k : \vec{x} \in \mathscr{F}\right\}.
\end{equation*}

\begin{thm}\label{thma-7}
$\max\{\alpha^R,\beta^R\}, \max\{\alpha^R_1,\beta^R_1\}$, and $ \max\{\alpha,\beta\}$ are lower bounds for the optimal objective function value of the QCOP. Further, $\max\{\alpha^R,\beta^R\}\leq \max\{\alpha^R_1,\beta^R_1\}\leq \max\{\alpha,\beta\}$.
\end{thm}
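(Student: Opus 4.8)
Proof proposal for Theorem~\ref{thma-7}.

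The plan is to derive everything from two elementary monotonicity facts: (i) minimizing a fixed linear function over $\mathscr{F}_R$ instead of over $\mathscr{F}$ can only decrease the optimum, since $\mathscr{F}\subseteq\mathscr{F}_R$; and (ii) at a binary point, replacing a coefficient by its ceiling is harmless because the data $\matr{Q},\vec{c}$ is integral, so the row contributions $c_k+\sum_j q_{kj}x_j$ are integers. These also give $l^R_k\le l_k$ and $m^R_k\le m_k$ (the feasible set $\{\vec{x}\in\mathscr{F}:x_k=1\}$ is contained in $\{\vec{x}\in\mathscr{F}_R:x_k=1\}$), and hence $\lceil l^R_k\rceil\le l_k$, $\lceil m^R_k\rceil\le m_k$ since $l_k,m_k\in\mathbb{Z}$.

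First I would show $\alpha^R_1$ and $\beta^R_1$ are lower bounds. Take an optimal $\vec{x}^0$ and set $T=\{j:x^0_j=1\}$, as in the preceding theorem. For $k\in T$ the number $c_k+\sum_{j=1}^n q_{kj}x^0_j$ is an integer, and since $\vec{x}^0\in\mathscr{F}\subseteq\mathscr{F}_R$ with $x^0_k=1$ it is at least $l^R_k$, hence at least $\lceil l^R_k\rceil$. Summing,
\[
f(\matr{Q},\vec{c},\vec{x}^0)=\sum_{k\in T}\Big(c_k+\sum_{j=1}^n q_{kj}x^0_j\Big)\ \ge\ \sum_{k\in T}\lceil l^R_k\rceil\ =\ \sum_{k=1}^n \lceil l^R_k\rceil x^0_k\ \ge\ \alpha^R_1,
\]
and symmetrically $f(\matr{Q},\vec{c},\vec{x}^0)\ge\beta^R_1$, so $\max\{\alpha^R_1,\beta^R_1\}$ is a lower bound. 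By fact (i), minimizing $\sum_k\lceil l^R_k\rceil x_k$ over the larger set $\mathscr{F}_R$ gives $\alpha^R\le\alpha^R_1$, and likewise $\beta^R\le\beta^R_1$; hence $\alpha^R,\beta^R$ are lower bounds too, and in particular $\max\{\alpha^R,\beta^R\}\le\max\{\alpha^R_1,\beta^R_1\}$. The validity of $\max\{\alpha,\beta\}$ was already established.

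It remains to prove $\max\{\alpha^R_1,\beta^R_1\}\le\max\{\alpha,\beta\}$, for which it suffices to show $\alpha^R_1\le\alpha$ and $\beta^R_1\le\beta$. Both $\alpha^R_1$ and $\alpha$ are minima over the \emph{same} set $\mathscr{F}$, so I would just compare coefficients: $\lceil l^R_k\rceil\le l_k$ for every $k$ by the observation above, and $x_k\ge0$ on $\mathscr{F}$, so the objective defining $\alpha^R_1$ is pointwise $\le$ the one defining $\alpha$; hence $\alpha^R_1\le\alpha$, and the same with $m^R_k,m_k$ gives $\beta^R_1\le\beta$. Combining with the previous paragraph yields the full chain. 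The one point I would flag is the reliance on integrality of $\matr{Q}$ and $\vec{c}$: without it, $\lceil l^R_k\rceil$ may exceed $l_k$, which would break both the lower-bound claim for $\alpha^R_1$ and the inequality $\alpha^R_1\le\alpha$ — so this standing assumption (implicit wherever the ceilings appear) should be stated; everything else is just monotonicity of a minimum under enlarging the feasible region and under lowering the objective coefficients.
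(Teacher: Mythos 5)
Your proof is correct, and in fact it supplies an argument the paper itself omits: Theorem~\ref{thma-7} is stated without proof, evidently because it is viewed as an immediate extension of the preceding theorem on $\max\{\alpha,\beta\}$. Your route is exactly that natural extension — you reuse the identity $f(\matr{Q},\vec{c},\vec{x}^0)=\sum_{k\in T}\bigl(c_k+\sum_{j=1}^n q_{kj}x^0_j\bigr)$ from the paper's proof of the natural lower bound, replace $l_k$ by $\lceil l^R_k\rceil$ via $\mathscr{F}\subseteq\mathscr{F}_R$ and integrality of the row contributions, and then get the chain $\max\{\alpha^R,\beta^R\}\leq\max\{\alpha^R_1,\beta^R_1\}\leq\max\{\alpha,\beta\}$ from the two monotonicity facts (enlarging the feasible region of a minimum, and pointwise lowering the objective coefficients over the same feasible set). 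All steps check out, including the componentwise comparison of the maxima. Your caveat about integrality of $\matr{Q}$ and $\vec{c}$ is well taken and worth flagging: the ceilings in the definitions of $\alpha^R,\beta^R,\alpha^R_1,\beta^R_1$ are only justified because $c_k+\sum_j q_{kj}x_j$ is an integer at binary points, an assumption the paper leaves implicit (it holds for all of its integer-valued test data); without it one should drop the ceilings, and then the same argument goes through verbatim.
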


Note that the lower bound $\max\{\alpha^R,\beta^R\}$ can be identified in polynomial time since we are solving at most $2m+2$ linear programming problems under suitable assumptions on $\mathscr{F}_R$. The bound $\max\{\alpha^R_1,\beta^R_1\}$ is better than $\max\{\alpha^R,\beta^R\}$  but needs to solve $2m$ linear programs and two LCOPs.

\begin{cor}
The natural lower bound and its relaxations as discussed in Theorem~\ref{thma-7} obtained using any of the equivalent representations of a QCOP are lower bounds on the optimal objective function value of the QCOP.
\end{cor}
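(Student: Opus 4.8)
The statement is an immediate consequence of the definition of equivalent representation together with Theorem~\ref{thma-7}, so the plan is essentially a two-line observation rather than a genuine computation. The key point is that all of the quantities $l_k,m_k,\alpha,\beta$ (and their relaxed variants $l^R_k,m^R_k,\alpha^R,\beta^R,\alpha^R_1,\beta^R_1$) are defined purely in terms of the data pair $(\matr{Q},\vec{c})$ and the feasible family $\mathscr{F}$ (or its relaxation $\mathscr{F}_R$), and that an equivalent representation by definition does not change the function $f$ on $\mathscr{F}$.

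First I would fix an equivalent representation $(\matr{Q}^1,\vec{c}^1)$ of $(\matr{Q},\vec{c})$ and apply Theorem~\ref{thma-7} verbatim to the instance $(\matr{Q}^1,\vec{c}^1)$: this gives that the numbers $\max\{\alpha^R,\beta^R\}$, $\max\{\alpha^R_1,\beta^R_1\}$ and $\max\{\alpha,\beta\}$, \emph{computed from the data $(\matr{Q}^1,\vec{c}^1)$}, are lower bounds for the optimal objective value of the QCOP whose objective is $f(\matr{Q}^1,\vec{c}^1,\cdot)$ over $\mathscr{F}$. Then I would invoke the definition of equivalent representation, namely $f(\matr{Q},\vec{c},\vec{x})=f(\matr{Q}^1,\vec{c}^1,\vec{x})$ for all $\vec{x}\in\mathscr{F}$, to conclude that the two QCOPs have the same optimal objective function value; hence every lower bound for one is a lower bound for the other. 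This already yields the corollary.

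The only point that needs a word of care — and it is the closest thing to an ``obstacle'' here — is that $f(\matr{Q},\vec{c},\vec{x})$ and $f(\matr{Q}^1,\vec{c}^1,\vec{x})$ need only agree on $\mathscr{F}$, not on $\mathscr{F}_R$; indeed the discussion preceding Corollary~\ref{cora-1} points out that this agreement on the continuous relaxation fails for several of the equivalent representations considered (e.g.\ convexification, concavification). One should therefore emphasize that the natural lower bound and its relaxations are nonetheless valid, because Theorem~\ref{thma-7} asserts each of $\max\{\alpha^R,\beta^R\}$, $\max\{\alpha^R_1,\beta^R_1\}$, $\max\{\alpha,\beta\}$ is a lower bound for the \emph{integer} optimum of the instance it is built from; the quantities are computed using a relaxed feasible set, but what they bound is the optimal value over $\mathscr{F}$, and that optimal value is representation-invariant. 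In short: apply Theorem~\ref{thma-7} to $(\matr{Q}^1,\vec{c}^1)$, then use $\min_{\vec{x}\in\mathscr{F}}f(\matr{Q}^1,\vec{c}^1,\vec{x})=\min_{\vec{x}\in\mathscr{F}}f(\matr{Q},\vec{c},\vec{x})$. No nontrivial estimate is required.
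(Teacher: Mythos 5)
Your argument is correct and is precisely the reasoning the paper intends: the corollary is stated without proof as an immediate consequence, and your two steps (apply Theorem~\ref{thma-7} to the equivalent instance $(\matr{Q}^1,\vec{c}^1)$, then use $f(\matr{Q},\vec{c},\vec{x})=f(\matr{Q}^1,\vec{c}^1,\vec{x})$ on $\mathscr{F}$ to identify the two optimal values) are exactly the intended justification. Your remark that agreement is only needed on $\mathscr{F}$, not on $\mathscr{F}_R$, correctly addresses the one point of potential confusion and is consistent with the paper's own discussion of the relaxed bounds.
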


It is not difficult to construct examples where different equivalent representations of the same QCOP having different natural lower bound values. This makes it interesting to identify which equivalent representation is preferred in terms of obtaining stronger lower bounds. The effectiveness of these lower bounds and their relative computational benefits will be discussed in section 3.2.3.  Various extensions of the Gilmore-Lawler lower bound for the QAP are known  in literature \cite{1962Gilmore,1963Lwaler,1994Pardalos} For the sake of brevity, we do not study  them here and restrict  our experiments to the basic natural lower bound. \\

\section{ Computational Experiments} \label{qspdef}

In this section we present results of extensive computational experiments carried out using common and well known equivalent representations of QCOP. These include selected representations generated by  symmetrization, diagonal annihilation, linear term annihilation, convexification, concavification, and triangularization and their combinations. The quadratic set covering problem is used to generate test instances. We want to emphasize that our experimental  study considers representations that are commonly used  and those identified without significant computational efforts. Our goal is to identify a preferred representation for the general purpose quadratic 0-1 programming problem solvers within CPLEX and GUROBI  among such equivalent representations. Consequently, representations that require  solving semidefinite programs or equivalent Lagrangian problems are not considered in this experimental analysis. We used  the 0-1  quadratic programming solvers of CPLEX12.5  and GUROBI6.0.5 to solve the test instances. The programs are coded in C++ and tested on a PC with windows 7 operating system, Intel 4790 i7 3.60 GHz processor and 32 GB of RAM.  For CPLEX and GUROBI, time limit parameter is set to 3 hours and all other parameters  are set to their default values. For statistical analysis we use  non-parametric statistical test "Wilcoxon Signed Rank Test" \cite{1945Wilcoxon} with SPSS, a commercial statistical software. For all experiments, we use quadratic set covering instances as test problems. \\

 The \textit{quadratic set covering problem} (QSCP) can be defined as follows. Let $I = \{1,2,\dotsc,m\}$ be a finite set and  $P = \{P_1,P_2,\dots,P_n\}$  be a collection of subsets of $I$. Let $J= \{1,2 ,\dotsc, n\} $ be the index set of elements of $P$. For each element $j \in J$, a cost $c_j$ is assigned and for each element $(i,j) \in J \times J$, a cost $q_{ij}$ is also assigned. A subset $V$ of $J$ is  a {\it cover} of $I$, if {$\displaystyle \cup _{j \in V} P_j = I$}. The the quadratic set covering problem is to select a cover $L$ such that $\sum_{j\in L} c_j + \sum_{i\in L} \sum_{j\in L} q_{ij}$ is minimized. Choosing $E = J$ and $F$ as the family of all covers of $I$, QSCP can be viewed as a special case of QCOP. \\

Let $\matr{D}= (d_{ij})_{m \times n}$ be an $m \times n$ matrix and its \th{$(i,j)$} element $d_{ij}$ is given by
\begin{equation*}
d_{ij} =
\begin{cases}
1 &\text{if $i\in P_j$}\\
0 &\text{otherwise.}
\end{cases}
\end{equation*}

\noindent Also, consider the decision variables $x_1,x_2,\ldots ,x_n$ where

\begin{equation*}
x_j =
\begin{cases}
1 &\text{if set $P_j$ is selected}\\
0 &\text{otherwise.}
\end{cases}
\end{equation*}

The vector of decision variables is represented by $ \vec{x} =  (x_1, \hdots, x_n )^T$ and $\vec{1}$ is a column vector of size $m$ with all its entries are equal to 1. Then the QSCP can be formulated  as a 0-1 integer program
\begin{align*}
\nonumber \mbox{QSCP:~~ Minimize~ } & \vec{c}\vec{x} + \vec{x}^T\matr{Q}\vec{x}\\
 \mbox{Subject to } & \matr{D}\vec{x} \geq \vec{1},\\
 &\vec{x} \in \{0,1\}^n.
\end{align*}

The family of feasible solutions of  QSCP is denoted by $\Im  = \{ \vec{x}  | \matr{D}\vec{x}\ge \vec{1}, \vec{x} \in \{0,1\}^n \}$. The family of feasible solutions of the  continuous relaxation of QSCP  is given by $\bar{\Im } = \{ \vec{x}  | \matr{D}\vec{x}\ge \vec{1}, \vec{0} \leq \vec{x} \leq \vec{1} \}$.  \\

\subsection{Generation of QSCP test instances}

Some of our test instances are taken from the standard benchmark problems for the linear set covering problem \cite{balas,1987Beasley,wool} with a quadratic part added to the objective function. Many of these problems are of large size (for the quadratic objective function) and hence we selected only a small subset of them. The test instances selected from benchmark problems are scp41 ~\cite{balas},  scpe1, scpe2, scpe3, scpe4 and scpe5 ~\cite{1987Beasley}, and scpcyc06, scpcyc07, scpclr10, scpclr11, scpclr12, and  scpclr13~\cite{wool}. The remaining instances are generated randomly with feasibility guaranteed.  For these problems the naming convention qsc-m\#n\# is used, where \# represent the corresponding value of $m$ and $n$. A pseudo code for generating these instances are given below.\\

\begin{algorithm}[H]
\caption{Pseudocode for set cover  instance generation}\label{euclid}
\begin{algorithmic}[1]

\State Input $m$ and $n$, where $m$ is the number of elements and $n$ is the number of subsets of $\{1,\hdots,m\}$.
\State Set subset $S_j = \emptyset$, $j=1,\hdots,n$.

\For{\texttt{$i \gets 1 \text{ to } m$ }}
            \State {  $k_i = $ a random integer between $[1,n/2];$ }
            \State select $k_i$ subsets $\{S_{i_1},\hdots,S_{i_{K_{i}}}\} \subseteq  \{S_1,\hdots,S_n\}$ randomly such that each contains element $i$.
                \For{\texttt{$l \gets 1 \text{ to } k_i$ }}
                 \State $S_{i_{l}} = S_{i_l} \cup \{i\}$.
	            \EndFor

	 \EndFor
 \State Output feasible set covering test instance.
\end{algorithmic}
\end{algorithm}

It is possible that the instances generated by the pseudo code above may contain empty subsets. Note that for the linear set covering problem, such sets can easily be removed by considering the sign of the corresponding cost. No such easy mechanism is available for the quadratic objective function and hence the possibility of empty sets are left intact.\\

 For each of the instances selected from the set covering benchmark problems and those generated by our pseudocode, eight different cost matrices $Q$, with various properties are considered. These are summarized in Table~\ref{q1}.

\begin{table}[H]
%\small
\caption{Classes of quadratic cost matrices used in the experiments}
\centering
\resizebox{\columnwidth}{!}{%
\begin{tabular}{@{} c p{5.5cm} p{10.0cm}@{}} \toprule
Number & Property of $\matr{Q}$ & Method of generation\\
 \midrule

1  & Non-negative elements & Uniformly distributed random integers in the range [5,10] \\
2  & Positive semidefinite & A random matrix $\matr{B}$ is generated with elements uniformly distributed in the range [-5,5] and $\matr{Q}$ is set to $\matr{B}\matr{B}^T$. \\
3 & Non-negative and positive semidefinite & same as above except that the elements of $B$ are uniformly distributed random integers in the range [5,10].\\
4 & Random and balanced & Uniformly distributed random integers in the range [-5,5].\\
5 & Random and positively skewed & Uniformly distributed random integers in the range [-5,10].\\
6 & Random and negatively skewed & Uniformly distributed random integers in the range [-10,5]. \\
7 & Rank 1 & $\matr{Q}$ is set to $\vec{a}\vec{b}$ where $\vec{a}$ is a column vector and $\vec{b}$ is a row vector having elements as uniformly distributed random integers in the ranges [-10,10] and [-5,5] respectively.\\
8 & Rank 2 &  $\matr{Q}$ is set to $\vec{a}_1\vec{b}_1 + \vec{a}_2\vec{b}_2$ where $\vec{a}_1, \vec{a}_2$ are column vectors and $\vec{b}_1,\vec{b}_2$ are row vectors with  uniformly distributed random integers in ranges [-10,10] and [-5,5] respectively as elements.\\
\bottomrule
\end{tabular}
}
\label{q1}
\end{table}

For the benchmark instances taken from the literature, the cost vector $\vec{c}$ is chosen as the corresponding linear cost vectors of those instances. For instances that we generated, the linear cost vector is selected as the all one vector. The seed for the random number generator is set to $2n+3m+11$.
We use the notations ORG, SYM, CNX, CNV, UT, SYMI respectively for the  original ($\matr{Q},\vec{c}$), and the equivalent representations of $(\matr{Q},\vec{c})$ obtained by symmetrization, convexification, concavification, triangularization, and symmetrization followed by convexification.  For statistical analysis of our experimental data we use the non-parametric "Wilcoxon Signed Rank Test" \cite{1945Wilcoxon} using SPSS, a commercial statistical software package. Since $\matr{Q}$ is generated with some random criteria, ORG can also be viewed as a random equivalent perturbation applied on SYM or any other structures forms, using Theorem~\ref{thm-2.4}.

\subsection{Computational experiments and analysis of the results} Using the problem instances generated, we  carried out extensive computational experiments with the following objectives:
\begin{enumerate}
\item Compare the effect of the equivalent representations ORG, SYM, CNX, CNV, UT, and SYMI when the 0-1 quadratic programming solvers of  CPLEX and GUROBI are used as exact solvers on each class of test problems.
\item Compare the effect of the equivalent representations ORG, SYM, CNX, CNV, UT, and SYMI when the 0-1 quadratic programming solvers of  CPLEX and GUROBI are used as heuristic solvers (time restricted) on each class of test problems.
\item Effect of equivalent representations on the natural lower bound and its relaxations.
\end{enumerate}

\subsubsection{Effect of equivalent representations using CPLEX and GUROBI as exact solvers}
Our computational experiments identified  interesting patterns that can be used to provide additional guidelines on using CPLEX and GUROBI solvers. Let us now discuss the outcome of the experiments in details along with conclusions drawn.\\

 The results of the  experiments comparing the effect of equivalent representations on running time of CPLEX and GUROBI 0-1 quadratic programming solvers are summarized in tables \ref{table:QNNCPLEX-GUROBIM2}  - \ref{table:Rank2CPLEX-GUROBIM2}. Here the solvers are used as 'exact solver' to compute an optimal solution. In each table, there is a column corresponding to each of the equivalent representations considered and this column contains cpu time (in milliseconds) taken for the specific representation it is associated with and the specific solver used. The column `` Opt val" represents  the optimal objective function value.   There is only one column for this value since the optimal objective function values are same for all equivalent representations of the same instance.   The column ``problem'' represents the name of the instance used. In the column ``size'', $m$ is the number of constraints and $n$ is the number of variables.  The best CPU time is marked in boldface letter. Better CPU time between CPLEX and GUROBI  is identified by an asterisk(*). To manage the experiments effectively, a time limit of 3 hours was imposed on both solvers.  If a solver can not solve  an instance (representation) to  optimality with in 3 hours, we enter ``-" in place of CPU time. We also tried to solve many of the instances that are not solved in 3 hours by running without time limits and the problems were not solved after running overnight and we aborted such jobs. \\

 As the tables demonstrate, GUROBI is more sensitive towards different representations but solved faster the instances that are solved to optimality.  CPLEX appeared to be more robust with respect to the equivalent representations considered but was slower compared to GUROBI on most of the instances that GUROBI was able to solve. In general, for CPLEX, the equivalent representation obtained by symmetrization was more promising while for GUROBI,  triangularization produced a better representation. The statistical significance of this observation was confirmed using the Wilcoxin sign test \cite{1945Wilcoxon}. There are clear exceptions too. When $\matr{Q}$ is selected as positive semidefinite instances (Table \ref{table:QCNXCPLEX-GUROBIM2}), for the original representation, GUROBI uniformly produced superior running time  whereas for many of the other equivalent representations, the running time for this solver was prohibitively large. This observation is intriguing since symmetry with positive semidefinite is  generally viewed as a more  desirable property. Note that the representation ORG can be viewed as adding a random skew-symmetric matrix to SYM. Thus, it is not the case that more structured problems need not be the winner always. This points to the relevance of studying equivalent representations in general rather than more structured ones.  For this class, CPLEX did not achieve the impressive running times that GUROBI produced for any of the problems. However, we noticed that CPLEX was not much sensitive to the underlying representations for this class of problems.\\

 An overview of the outcomes of the above experiments are summarized in Table \ref{table:summary1} below followed by detailed results in Tables 3 to 10.\\

 \begin{table}[H]
%\small
\caption{Summary of results}
\centering
\resizebox{\columnwidth}{!}{%
% [inline block 0: 9 envs, 42804 chars -> data_tex | \begin{tabular}{@{} l l l l @{}} \toprule[1.5pt] Instance Class & CPLEX & GUROBI & better \\...]

\label{table:Rank2CPLEX-GUROBIM2}
\end{table}
\end{landscape}

%////////////////////////
\clearpage

\subsubsection{Experimental results when  CPLEX and GUROBI are used as heuristic solvers} \label{sectionheu}

To assess the effectiveness of CPLEX and GUROBI as a heuristic solver and compare their relative sensitivity towards equivalent representations, we run each of the test instances by specifying two preset time limits of 15 minutes and 30 minutes. For small size problems, all representations produced optimal values within in 30 minutes, although for many cases, optimality was not proved. Thus, we discuss further only experimental results for moderately large instances. \\

In the tables   \ref{table:TL15-30CPLEX-NN} to \ref{table:TL15-30CPLEX-ARBRS}, the column "val" represents the heuristic solution value obtained by the specific representation  associated with and the specific solver with the specified time limit.   The best heuristic solution is marked in boldface letter.  \\

It may be noted that the difference between various representation  solutions quality is not significant different, but we did observe , some level of statistical significance as  summarize in  Table \ref{table:summaryheu}.\\\\

\begin{table}[H]
%\small
\caption{Summary of results: Heuristic value}
\centering
\resizebox{\columnwidth}{!}{%
% [inline block 1: 9 envs, 44622 chars -> data_tex | \begin{tabular}{@{} l| l |l |l| l  @{}} \toprule[1.5pt]  &  \multicolumn{4}{@{}c@{}}{Best performing representation}\\...]
%
 }
  \label{table:TL15-30CPLEX-Rank2}
\end{table}
\end{landscape}
%////////////

%////////////

\clearpage

\subsubsection{Experiment results for  equivalent representations and the natural lower bound} \label{section4}

Although equivalent representations preserve optimal objective function value, these transformations need not preserve the natural lower bound value. In this subsection, we compare the effect of the representations ORG,CNX,CNV,SYM, SYMI, and UT on the natural lower bound and its relaxations.

We denote the lower bound $\max\{\alpha^R,\beta^R\}$ by NLB-R, $\max\{\alpha^R_1,\beta^R_1\}$ by NLP-R1, and $ \max\{\alpha,\beta\}$ by NLB.  For  each test instance  three natural lower bounds mentioned above  are identical for each representation.  CPLEX is used in all experiments as the solver. A summary of the results are presented in Table  in Table \ref{table:NLBfrequency}. The column " frequency" represents number of instances for which a representation obtained the tighter lower bound  out of 32 instances we consider.  As the table shows, most of the time SYM and SYMI representations produced  better NLB bound,  NLBRLX bound, and INLBRelax bound  compare to other representations. There are some exceptions too. When $\matr{Q}$ is selected as non negative instances (Table \ref{NLBtime}), for NLB bound,  ORG, CNX, and CNV representations attained tighter lower bounds for 24 instance but  SYM  and SYMI representations only 16 times.

\begin{table}[H]
\small
\caption{Frequency table for tighter NLB bounds (32 test instances for each class) }
\centering
\resizebox{\columnwidth}{!}{%
\begin{tabular}{@{} l l l l l  l l  | l   l l l   l l | l l  l l l l @{}} \toprule[1.5pt]
\multirow{3}[3]{*}{Instance class}  & \multicolumn{15}{@{}c@{}}{Frequency } \\
 \cmidrule[1.2pt](lr){2-19} & \multicolumn{5}{@{}c@{}}{NLB bound} &  \multicolumn{5}{@{}c@{}}{NLBRelax bound} &  \multicolumn{5}{@{}c@{}}{INLBRelax bound}   \\
 \cmidrule[1.2pt](lr){2-7}	\cmidrule[1.2pt](lr){8-13} \cmidrule[1.2pt](lr){14-19}  &   ORG & CNX & CNV & SYM & SYMI & UT  &   ORG & CNX & CNV & SYM & SYMI & UT &   ORG & CNX & CNV & SYM & SYMI & UT\\
& & & & & & & & & & & & & & & & & & \\ \midrule

$\matr{Q}$ non-negative & 24 & 24 & 24 & 16 & 16 & 0 & 22 & 22 & 22 & 13 & 13 & 0 & 24 & 24 & 24 & 12 & 12 & 0  \\
$\matr{Q}$ positive semidefinite  & 29 & 29 & 29 & 29 & 29 & 3 & 29 & 29 & 29 & 29 & 29 & 3 & 29 & 29 & 29 & 29 & 29 & 3 \\
$\matr{Q}$ non-negative and positive semidefinite &  32 & 32 & 32 & 32 & 32 &  0 &  32 & 32 & 32 & 32 & 32 &  0 &  32 & 32 & 32 & 32 & 32 &  0 \\

$\matr{Q}$ arbitrary with symmetric distribution  & 0 & 0 & 0 & 32 & 32 & 0  & 0 & 0 & 0 & 32 & 32 & 0  & 0 & 0 & 0 & 32 & 32 & 0   \\
$\matr{Q}$ arbitrary with left-skewed distribution  & 0 & 0 & 0 & 32 & 32 & 0  & 0 & 0 & 0 & 32 & 32 & 0  & 0 & 0 & 0 & 32 & 32 & 0   \\
$\matr{Q}$ arbitrary with right-skewed distribution  & 0 & 0 & 0 & 32 & 32 & 0  & 0 & 0 & 0 & 32 & 32 & 0  & 0 & 0 & 0 & 32 & 32 & 0   \\
$\matr{Q}$ Rank 1  &   0 & 0 & 0 & 31 & 31 & 1 & 0 & 0 & 0 & 31 & 31 & 1 & 0 & 0 & 0 & 31 & 31 & 1    \\
$\matr{Q}$ Rank 2  &    0 & 0 & 0 & 31 & 31 & 1 & 0 & 0 & 0 & 31 & 31 & 1 & 0 & 0 & 0 & 31 & 31 & 1    \\
\bottomrule[1.5pt]
\end{tabular}
}
\label{table:NLBfrequency}
\end{table}

In tables  \ref{NLBtime} - \ref{NLBtime2} we collect the minimum CPU time, the maximum CPU time, and the average CPU time used to obtained the natural lower bound are presented. The CPU time is  in milliseconds.  From the experimental results, we observe that for most of the class of problems, SYM representation uses the minimum CPU time to compute NLB, and NLBRelax bounds. But for INLBRelax, for some classes UT representation taken  minimum CPU time. In Table \ref{table:summaryNLB} we summarize these results.

%///////////////////////

\begin{table}[H]
\caption{Natural lower bound calculation} \label{NLBtime}
\centering
\resizebox{\columnwidth}{!}{%
% [inline block 2: 9 envs, 35909 chars -> data_tex | \begin{tabular}{@{} l l c c | c c c | c c c | c c c    @{}} \toprule[1.5pt] \multirow{3}[3]{*}{problem}  & \multicolumn{...]

%}
\label{table:summaryNLB}
\end{table}

%/////////
%///////////////////////

%\clearpage

\section{Conclusion}

In this paper we analyzed commonly used equivalent representations of a QCOP in the context of the quadratic set covering problem. Our experimental analysis using the binary quadratic programming problem solvers of CPLEX and GUROBI demonstrated that CPLEX solver works better under symmetrization while GUROBI performs better with triangularization. Although there are outliers, these conclusions are statistically significant as confirmed by Wilcoxin test. CPLEX was found to be more robust and less sensitive with respect to the equivalent representations considered while GUROBI is somewhat sensitive to these representations and was not able to solve some of the instances that CPLEX solved. However, for instances that GUROBI solved, it terminated faster. Both solvers produced good quality solutions in experimental runs with short time limits (15 minutes and 30 minutes) establishing the value of these solvers as heuristics. \\

We also studied the effect of equivalent representations on the natural lower bound values. It is concluded that the symmetric representation produced stronger lower bounds on the average. This representation also have better running times. For one class of problems, the UT representation produced tighter bounds. \\

Further, we presented various procedures to obtain more involved equivalent representations. In particular, exploiting the seminal works of Billionnet etl~\cite{be1, ba10}, Galli  and Letchford~\cite{gal}, Hammer and  Rubin~\cite{1970Hammer}, and P\"{o}rn et al~\cite{ba2} and linking recent research works on diagonalization (linearization) associated with QCOP and CVP of LCOP, we obtained strong equivalent representations.\\

Our experimental results offer,  guidelines for selecting data representations for standard solvers and we hope to stimulate research on additional experimental analysis, particularly using more sophisticated representations and/or on different family of test problems. We have done some additional experiments using QUBO instead of QSCP and obtained conclusions similar to that are reported here for  the case of QSCP. Finally, our study also provide some benchmark instances for the quadratic set covering problems, which will be made available to interested researchers.\\

\noindent{\textbf{\large Acknowledgement:} This work was supported by an NSERC discovery grant and an NSERC discovery accelerator supplement awarded to Abraham P. Punnen.\\

\appendix

\section{Analysis of generated test instances}
Since the test problems of the type qsc-m\#n\#
In Table \ref{instAna} we provide the analysis of test instances we generated. In the column ``row sum" is  the row sum of the coefficient matrix $\matr{D}$ indicated in section \ref{qspdef} and ``col sum"  to the column sum. Under the column ``rowsum, ``minrow", ``maxrow", and ``avgrow" represent respectively the minimum,  maximum, and average of row sums. Similarly,  under the column ``colsum", ``mincol", ``maxcol", and ``avgcol" represent  respectively the minimum,  maximum, and average of column sums. The value of ``mincol" as zero indicates at least one empty subset and the value of ``minrow" as zero indicates infeasible instance. All instances we generated are feasible. The column ``no.of empty subsets " represents the number of empty subsets for the given instance.

\begin{table}[H]
\caption{Test instance analysis}
\centering
\small
%\resizebox{\columnwidth}{!}{%
\begin{tabular}{@{} l l c c c c c c c c  @{}} \toprule[1.5pt]
\multirow{3}[3]{*}{problem}  & \multicolumn{2}{@{}c@{}}{size}  & \multicolumn{3}{@{}c@{}}{row sum} & \multicolumn{3}{@{}c@{}}{col sum} &   \multirow{2}[2]{*}{no.of empty subsets} \\
 \cmidrule[1.2pt](lr){2-3}  \cmidrule[1.2pt](lr){4-6} \cmidrule[1.2pt](lr){7-9} &	m & n & minrow  &  maxrow & avgrow & mincol & maxcol &  avgcol  &  \\
 %& & & & & & & & & \\
 \midrule
%qsc-m5n10 & 5 &  8   &  1   &  5   &  2.8   &  0   &  3   &  1.4  &  2\\

%qsc-m5n15 & 5 &  12   &  3   &  7   &  5   &  0   &  4   &  $\mynum{1.66667}$ & 3\\

qsc-m5n20 & 5 &  20   &  1   &  9   &  5.4   &  0   &  3   &  1.35 &  4\\

%qsc-m5n25 & 5 &  17   &  1   &  10   &  4.6   &  0   &  3   &  0.92 &  8\\

qsc-m5n30 & 5 &  30   &  4   &  13   &  8.2   &  0   &  4   &  $\mynum{1.36667}$ &  2\\

qsc-m10n20 & 10 &  20   &  1   &  10   &  5.5   &  0   &  5   &  2.75 &  1\\

%qsc-m10n25 & 10 &  25   &  1   &  12   &  8.9   &  1   &  6   &  3.56 &  0\\

qsc-m10n30 & 10 &  30   &  2   &  15   &  10.3   &  0   &  7   &  $\mynum{3.43333}$ & 1\\

%qsc-m10n35 & 10 &  35   &  2   &  16   &  9.4   &  1   &  6   &  $\mynum{2.68571}$ &  0\\

qsc-m10n40 & 10 &  40   &  4   &  19   &  10.8   &  0   &  6   &  2.7 &  1\\

%qsc-m10n45 & 10 &  44   &  1   &  21   &  8.8   &  0   &  4   &  $\mynum{1.95556}$ &  1\\

qsc-m10n50 & 10 &  50   &  1   &  24   &  11.7   &  0   &  5   &  2.34 &   4\\

qsc-m10n100 & 10 &  100   &  1   &  20   &  12.6   &  0   &  4   &  1.26 &   31\\

qsc-m15n20 & 15 &  20   &  3   &  10   &  6.6   &  3   &  8   &  4.95 &  0\\

%qsc-m15n25 & 15 &  24   &  1   &  11   &  $\mynum{6.66667}$   &  0   &  7   &  4 &   1\\

qsc-m15n30 & 15 &  30   &  2   &  14   &  $\mynum{7.06667}$   &  1   &  8   &  $\mynum{3.53333}$ &   0\\

%qsc-m15n35 & 15 &  35   &  4   &  17   &  10.8   &  1   &  7   &  $\mynum{4.62857}$ & 0 \\

qsc-m15n40 & 15 &  40   &  1   &  18   &  $\mynum{10.7333}$   &  1   &  8   &  $\mynum{4.025}$ &  0\\

%qsc-m15n45 & 15 &  45   &  2   &  22   &  $\mynum{12.4667}$   &  1   &  8   &  $\mynum{4.15556}$ & 0 \\

qsc-m15n50 & 15 &  50   &  1   &  23   &  $\mynum{13.6667}$   &  1   &  8   &  4.1 &  0\\

qsc-m20n20 & 20 &  20   &  1   &  10   &  6.65   &  4   &  9   &  6.65 &  0\\

qsc-m20n30 & 20 &  30   &  1   &  15   &  8.05   &  1   &  8   &  $\mynum{5.36667}$ &  0\\

qsc-m20n40 & 20 &  40   &  2   &  20   &  10.65   &  1   &  10   &  5.325 & 0 \\

qsc-m20n50 & 20 &  50   &  1   &  25   &  14.9   &  2   &  12   &  5.96 &   0\\

%qsc-m20n100 & 20 &  100   &  1   &  46   &  20.55   &  1   &  8   &  4.11 &  0\\

qsc-m25n20 & 25 &  20   &  1   &  10   &  5.48   &  3   &  11   &  6.85 & 0 \\

qsc-m25n40 & 25 &  40   &  1   &  20   &  10.72   &  2   &  10   &  6.7 &  0\\

qsc-m25n50 & 25 &  50   &  1   &  24   &  13.84   &  3   &  14   &  6.92 &  0\\

qsc-m30n20 & 30 &  20   &  1   &  10   &  $\mynum{6.03333}$   &  6   &  13   &  9.05 &  0\\

qsc-m30n40 & 30 &  40   &  1   &  20   &  $\mynum{11.7333}$   &  4   &  13   &  8.8 &  0\\

qsc-m30n50 & 30 &  50   &  1   &  25   &  12.4   &  4   &  12   &  7.44 &  0\\

qsc-m30n100 & 30 &  100   &  3   &  50   &  20.6   &  2   &  11   &  6.18 &  0\\

qsc-m40n100 & 40 &  100   &  1   &  49   &  $\mynum{24.075}$   &  4   &  15   &  9.63 &  0\\

qsc-m40n150 & 40 &  150   &  2   &  74   &  $\mynum{34.025}$   &  4   &  15   &  $\mynum{9.07333}$ &  0\\

qsc-m40n200 & 40 &  200   &  2   &  99   &  $\mynum{53.475}$   &  3   &  16   &  $\mynum{10.695}$ & 0 \\

qsc-m40n250 & 40 &  250   &  3   &  124   &  73.55   &  6   &  18   &  $\mynum{11.768}$ &  0\\

qsc-m50n100 & 50 &  100   &  1   &  50   &  26.08   &  5   &  20   &  13.04 &  0\\

qsc-m50n200 & 50 &  200   &  1   &  100   &  47.26   &  4   &  20   &  $\mynum{11.815}$ &  0\\

qsc-m50n300 & 50 &  300   &  2   &  148   &  79.72   &  5   &  23   &  $\mynum{13.2867}$ &  0\\

qsc-m50n350 & 50 &  350   &  9   &  174   &  86.72   &  5   &  24   &  $\mynum{12.3886}$ &  0\\
\bottomrule[1.5pt]
 \end{tabular}
 \label{instAna}
\end{table}

\clearpage


\begin{thebibliography}{1}


\bibitem{ba3} W. P. Adams and H. D. Sherali, Reformulation linearization technique for discrete optimization problems, In: Pardalos, Panos M., Du, Ding-Zhu, Graham, Ronald L. (eds.) Handbook of Combinatorial Optimization, \textit{Springer}, New York,  (2013) 2849 -- 2896.

\bibitem{2004amaldi} E. Amaldi, A. Capone, M. Cesana, F. Malucelli, and F. Palazzo, WLAN coverage planning: Optimization models and algorithms, in: Proceedings of the IEEE Vehicular Technology Conference, VTC Spring, 2004.

%\bibitem{2004amaldi1} E. Amaldi, A. Capone, M. Cesana, F. Malucelli, and F. Palazzo, Optimizing WLAN radio coverage, in: Proceedings of the IEEE International Conference on Communications, 2004.

%\bibitem{ba4} M. F. Anjos and  J.B. Lasserre,  Handbook of semidefinite, conic and polynomial optimization: theory, algorithms, software and applications, \textit{International Series in Operations Research \& Management Science}, 166 (2012) 1 -- 22.

\bibitem{1992Assad} A. Assad and W. Xu, The quadratic minimum spanning tree problem, \textit{Naval Research Logistics}, 39 (1992) 399 -- 417.

\bibitem{balas} E. Balas  and  A. Ho, Set covering algorithms using cutting planes, heuristics, and sub-gradient optimization: a computational study, \textit{Mathematical Programming}, 12 (1980) 37 -- 60.

%\bibitem{1975bazaraa} M.S. Bazaraa, Computerized layout design: A branch and bound approach, \textit{AIIE Transactions}, 7 (1975) 432 -- 438.

%\bibitem{baz}   M. S. Bazaraa and  J. J.  Goode,  A cutting-plane algorithm for the quadratic set-covering problem, \textit{Operations Research}, 23  (1975) 150 -- 158.

\bibitem{1975bazaraa} M. S. Bazaraa, Computerized layout design: A branch and bound approach, \textit{AIIE Transactions}, 7 (1975) 432 -- 438.

\bibitem{1987Beasley} J. E. Beasley,  An algorithm for set covering problems, \textit{European Journal of Operational Research}, 31 (1987) 85 -- 93.

%\bibitem{beas} J. E. Beasley, A lagrangian heuristic for set-covering problems, \textit{Naval Research Logistics}, 37 (1990) 151 -- 164 .

\bibitem{2004Billionnet} A. Billionnet,  Using a mixed integer programming tool for solving the 0--1 quadratic knapsack problem, \textit{INFORMS Journal on Computing}, 16 (2004) 188 -- 197.

\bibitem{be1} A. Billionnet and S. Elloumi, Using mixed integer quadratic programming solver for the unconstrained quadratic 0-1 problem, \textit{Mathematical Programming, Series A} 109 (2007) 55--68.

\bibitem{ba1} A. Billionnet, S. Elloumi, and A. Lambert, Exact quadratic convex reformulations of mixed-integer quadratically constrained problems, \textit{Mathematical Programming, Series A}, 158 (2016) 235--266.

\bibitem{ba10} A. Billionnet, S. Elloumi, and M. C. Plateau, Improving performance of standard solvers for quadratic 0-1 programs by a tight convex reformulation: The QCR method, \textit{Discrete Applied Mathematics}, 157 (2009) 1185--1197.

\bibitem{bp1} D. K. Benvenuti and A. P. Punnen, SC-Hamiltonian graphs and digraphs: New necessary conditions and their impacts, \textit{Discrete Mathematics}, 310 (2010) 2841--2846.

\bibitem{berenguer79} X. Berenguer, A characterization of linear admissible  transformations for the m-Travelling Salesman Problem, \textit{European Journal of Operational Research}, 3 (1979) 232--238.

\bibitem{2000boros} E. Boros, P. L. Hammer, T. Ibaraki, A. Kogan, E. Mayoraz, and I.B. Muchnik, An implementation of logical analysis of data, \textit{IEEE Transactions on Knowledge and Data Engineering}, 12  (2000) 292 -- 306.

%\bibitem{1999Burkard} R.E. Burkard, E. Cela, P.M. Pardalos, and L.S. Pitsoulis, \textit{The quadratic assignment problem}, In Ding-Zhu Du and PanosM. Pardalos, editors, Handbook of Combinatorial Optimization, \textit{Springer US}, (1999) 1713 -- 1809.

%\bibitem{2009Burkard}  R. Burkard, M. Dell'Amico and S. Martello, Assignment Problems, Revised Reprint, \textit{SIAM},  2009.

\bibitem{1990ceder} A. Ceder and Y. Israeli, Scheduling considerations in designing transit routes at the network level, in: Proceedings of the Fifth International Workshop on Computer-Aided Scheduling of Public Transport, 1990, in: LNEMS, 386 (1992)  113 -- 136.

\bibitem{1998Cela} E. Cela, The quadratic assignment problem, theory and algorithms, Springer Scince+Business Media Dordrecht, 1998.

\bibitem{cb2}  A. \'{C}usti\'{c} and B. Klinz, The constant objective value property for combinatorial optimization problems, arXiv preprint arXiv:1405.6096, 2014.

\bibitem{cb1} A. \'{C}usti\'{c} and B. Klinz, The constant objective value property for multidimensional assignment problems, \textit{Discrete Optimization}, 19 (2016) 23--35.

\bibitem{2017Ante} A. \'{C}usti\'{c} and A. P. Punnen, A characterization of linearizable instances of the quadratic minimum spanning tree problem, \textit{Journal of Combinatorial Optimization}, (2017) 1--18.

\bibitem{ac1} A. \'{C}usti\'{c}, V. Sokol, A. P. Punnen, and B. Bhattacharya, The bilinear assignment problem: complexity and polynomially solvable special cases, \textit{Mathematical programming}, 166 (2017) 185--205.

\bibitem{bruno} B.  Escoffier  and  P. L. Hammer,  Approximation of the Quadratic Set Covering problem, \textit{Discrete Optimization}, 4 (2007)  378 -- 386.

\bibitem{2014Fischer} A. Fischer, F. Fischer, G. Jager, J. Keilwagen, P. Molitor, and I. Grosse, Exact algorithms and heuristics for the quadratic traveling salesman problem with an application in bioinformatics, \textit{Discrete Applied Mathematics}, 166 (2014) 97 -- 114.


\bibitem{gb} E. Gabovich, Constant discrete programming problems on substitution sets, {\it Kibernetika}, 5 (1976) 128-134.

\bibitem{gal} L. Galli  and A.N. Letchford, A compact variant of the QCR method for quadratically constrained quadratic 0-1 programs, \textit{Optimization Letters}, 8 (2014) 1213--1224.

%\bibitem{gar} R. S. Garfinkel and G. L. Nemhauser, \textit{Integer Programming}, A Wiley-Interscience Publication, John Wiley and Sons (1973).

\bibitem{1962Gilmore}  P. C. Gilmore, Optimal and suboptimal algorithms for the quadratic assignment problem, \textit{SIAM Journal on Applied Mathematics}, 10 (1962) 305 -- 313.




\bibitem{wool}  T. Grossman  and A. Wool, Computational experience with approximation algorithms for the set covering problem, \textit{European Journal of Operational Research}, 101 (1997) 81 -- 92.

\bibitem{gu1} A. Gupte, S. Ahmed, M. S. Cheon, and S. Dey, Solving mixed integer bilinear problems using MILP formulations, \textit{SIAM Journal of Optimization}, 23 (2013) 721--744.


\bibitem{GUROBI605} GUROBI Optimization Inc., GUROBI Optimizer Reference Manual, (2016) ttp://www.GUROBI.com.

%\bibitem{2002Gutin} G. Gutin and A. Punnen, editors, The traveling salesman problem and its variations, volume 12 of Combinatorial Optimization, Springer New York, 2002.


\bibitem{1970Hammer} P. L. Hammer and  A. A. Rubin, Some remarks on quadratic programming with 0-1 variables, \textit{RAIRO - Operations Research - Recherche Opérationnelle}, 3 (1970) 67 -- 79.

\bibitem{2005hammer} P. L. Hammer, Non linear set covering problems in logical analysis of data, in: SIAM Conference on Optimization, 2005.

\bibitem{2017Hu} H. Hu and R.  Sotirov, Special cases of the quadratic shortest path problem, Journal of Combinatorial Optimization, (2017) 1-- 24.

\bibitem{2018Hu} H. Hu and R.  Sotirov, A polynomial time algorithm for the linearization problem of the QSPP and its applications, (2018) http://www.optimization-online.org/DB\textunderscore FILE/2018/01/6438.pdf

\bibitem{CPLEX12.5}  IBM, IBM ILOG CPLEX Optimization Studio CPLEX User's Manual, (2015)\\ https://www.ibm.com/software/info/ilog/.

\bibitem{1995israeli} Y. Israeli and A. Ceder, Transit route design using scheduling and multiobjective programming techniques, in: Proceedings of the Sixth International Workshop on Computer-Aided Scheduling of Public Transport, in: LNEMS,  430 (1995)  56 -- 75.

\bibitem{kp1} S. Kabadi and A. P. Punnen, Weighted graphs with Hamiltonian cycles of same length, \textit{Discrete Mathematics}, 271 (2003) 129--139.

\bibitem{2011Kabadi} S. Kabadi and A. Punnen, An $O(n^4)$ algorithm for the QAP linearization problem, \textit{Mathematics of Operations Research}, 36 (2011) 754--761.



\bibitem{2014Gary} G. Kochenberger, J. K. Hao, F. Glover, M. Lewis, Z. Lu, H. Wang, and Y. Wang, The unconstrained binary quadratic programming problem: a survey, \textit{Journal of Combinatorial Optimization}, 28 (2014) 58 -- 81.

%\bibitem{1971Konno} H. Konno,  A Bilinear Programming: Part II. Applications of Bilinear Programming. Technical Report 71-10, Operations Research House, Stanford University, Stanford CA, 1971.

\bibitem{1963Lwaler} E. L. Lawler, The quadratic assignment problem, \textit{Management Science}, 9 (1963) 586 -- 599.


\bibitem{2017Lendl} S. Lendl,  A. $\acute{C}$usti$\acute{c}$, and A. P. Punnen, Combinatorial Optimization Problems with Interaction Costs: Complexity and Solvable Cases, 	 arXiv:1707.02428 [math.OC] (2017)

%\bibitem{1999Burkard} R. Burkard, V. Deineko, and G. Woeginger, Erratum: the traveling salesman and the PQ-tree, \textit{Mathematics of operations research}, 24 (1999) 262 -- 272.


\bibitem{2017Oswin} A. Oswin, A. Fisher, F. Fisher, J. F. Meier, U. Pferschy, A. Pilz, and R. Stan$\check{e}$k, Minimization and maximization versions of the quadratic traveling salesman problem, \textit{Optimization}, 66 (2017) 521-- 546.

\bibitem{2017Pandey}  P. Pandey and A. P. Punnen, On a linearization technique for solving quadratic set covering problem and variations, \textit{Optimization Letters}, 11 (2017) 1357 -- 1370.

%\bibitem{2017Pandey1}  P. Pandey and A. P. Punnen, Comparative study of different linear programming formulations of the quadratic set covering problem, working paper.

%\bibitem{2017Pandey2}  P. Pandey and A. P. Punnen, Heuristics for the quadratic set covering problem, working paper.

%\bibitem{muth} M. Periannan, An ant-based algotithm for the minimum vertex cover problem, Thesis, Master of Science, The Pennsylvania State University (2007).

\bibitem{1994Pardalos}Y. Li, P. M. Pardalos, K. G. Ramakrishna et al., Lower bounds for the quadratic assignment problem, \textit{Annals of Operations Research}, 50 (1994) 87 - 411.

\bibitem{2007Pisinger} D. Pisinger, The quadratic knapsack problem—a survey, \textit{Discrete Applied Mathematics}, 5 (2007) 623 -- 648.


\bibitem{ba5} S. Poljak, F. Rendl, and H. Wolkowicz,  A recipe for semidefinite relaxation for (0,1)-quadratic programming, \textit{Journal of Global Optimization},  7 (1995) 51--73.

\bibitem{ba2}R. P\"{o}rn, O. Nissfolk, A. Sk\"{j}al, and T. Westerlund, Solving 0-1 quadratic programs by reformulation techniques, \textit{Industrial \& Engineering Chemistry Research}, 56 (2017) 13444--13453.


\bibitem{2017Punnen} A. P. Punnen and B. D.  Woods,  A characterization of linearizable instances of the quadratic traveling salesman problem,  arXiv:1708.07217.

\bibitem{2015Rostami} B. Rostami, F. Malucelli, D. Frey, and C. Buchheim, On the quadratic shortest path problem, In: E. Bampis (ed.) Experimental Algorithms, Lecture Notes in Computer Science, Springer International Publishing,  9125 (2015) 379 -- 390.


\bibitem{2007Sherali} H.D. Sherali and J.C. Smith, An improved linearization strategy for zero-one quadratic programming problems, \textit{Optimization Letters}, 1 (2007)  33 -- 47.


\bibitem{1945Wilcoxon} F. Wilcoxon, Individual comparisons by ranking methods,  \textit{Biometrics Bulletin},  14 (1945)  80-- 83.

%\bibitem{kexu} K. Xu, http://www.nlsde.buaa.edu.cn/~kexu/benchmarks/graph-benchmarks.htm.
\end{thebibliography}
\end{document}